\newcounter{zlist}
\newenvironment{zlist}{\begin{list}{{\rm(\arabic{zlist})}}{
\usecounter{zlist}\leftmargin2.5em\labelwidth2em\labelsep0.5em
\topsep0.6ex\itemsep0.3ex plus0.2ex minus0.3ex
\parsep0.3ex plus0.2ex minus0.1ex}}{\end{list}}
\newcounter{blist}
\newenvironment{blist}{\begin{list}{{\rm(\alph{blist})}}{
\usecounter{blist}\leftmargin2.5em\labelwidth2em\labelsep0.5em
\topsep0.6ex \itemsep0.3ex plus0.2ex minus0.3ex
\parsep0.3ex plus0.2ex minus0.1ex}}{\end{list}}
\newcounter{rlist}
\newenvironment{rlist}{\begin{list}{{\rm(\roman{rlist})}}{
\usecounter{rlist}\leftmargin2.5em\labelwidth2em\labelsep0.5em
\topsep0.6ex\itemsep0.3ex plus0.2ex minus0.3ex
\parsep0.3ex plus0.2ex minus0.1ex}}{\end{list}}
\newtheorem{theorem}{Theorem}[section]
\newtheorem{lemma}[theorem]{Lemma}
\newtheorem{thm}[theorem]{}
\newtheorem{proposition}[theorem]{Proposition}
\newtheorem{corollary}[theorem]{Corollary}
\newtheorem{remark}[theorem]{Remark}
\numberwithin{equation}{section}
\newcommand{\A}{{\mathbb {A}}}
\newcommand{\B}{{\mathbb {B}}}
\newcommand{\C}{{\mathbb {C}}}
\newcommand{\bG}{{\mathcal{G}}}
\newcommand{\bH}{{\mathcal{H}}}
\newcommand{\bT}{{\mathcal{T}}}
\newcommand{\cA}{\ensuremath{\mathscr{A}}}
\newcommand{\cC}{\ensuremath{\mathscr{C}}}
\newcommand{\cH}{\ensuremath{\mathscr{H}}}
\newcommand{\cM}{\ensuremath{\mathscr{M}}}
\newcommand{\cN}{\ensuremath{\mathscr{N}}}
\newcommand{\dK}{R}
\newcommand{\lra}{\longrightarrow}
\newcommand{\ve}{\varepsilon}
\newcommand{\kfield}{\mathbf{k}}
\newcommand{\Sy}{{\mathbb {S}}}
\begin{document}

\title{Generalised bialgebras and entwined monads and comonads}

\author{Muriel Livernet}
\address{Universit\'e Paris 13, CNRS, UMR 7539 LAGA, 99 avenue
  Jean-Baptiste Cl\'ement, 93430
  \mbox{Villetaneuse}, France}
\email{livernet@math.univ-paris13.fr}
\author{Bachuki Mesablishvili}
\address{A. Razmadze Mathematical Institute of I. Javakhishvili Tbilisi State University,
6, Tamarashvili Str.,  Tbilisi 0177, Republic of Georgia}
\email{bachi@rmi.ge}
\author{Robert Wisbauer}
\address{Department of Mathematics of HHU, 40225 D\"usseldorf, Germany}
\email{wisbauer@math.uni-duesseldorf.de}
\keywords{Monads, entwinings, operads, distributive laws, bialgebras}
\subjclass[2010]{18C20, 18D50, 16T10, 16T15}
%\date{\today}

\begin{abstract}
Jean-Louis Loday has defined generalised bialgebras and
proved structure theorems in this setting which can be seen as
general forms of the Poincar\'e-Birkhoff-Witt and the
Cartier-Milnor-Moore theorems.
It was observed by the present authors that parts of the
theory of generalised bialgebras are special cases
of results on entwined monads and comonads and the corresponding
mixed bimodules. In this article the Rigidity Theorem
of Loday is extended to this more general categorical framework.

\end{abstract}

\maketitle

\tableofcontents

\section{Introduction}
The introduction of {\em entwining structures} between an algebra and a
coalgebra by T. Brzezi\'nski and S. Majid in \cite{BrzMaj} opened new perspectives in the
mathematical treatment of  quantum principal bundles.
It turned out that these structures are special cases of distributive laws
treated in Beck's paper \cite{Be}. The latter were also used by Turi and Plotkin
\cite{TuPl} in the context of operational semantics.

These observations led to a revival of the investigation of various forms of
distributive laws. In a series of papers \cite{MW,MW-Bim,MW-Op} it was shown how they
allow for formulating the theory of Hopf algebras and Galois extensions in a general
categorical setting.

On the other hand, {\em generalised bialgebras}  as defined in
Loday \cite[Section 2.1]{Lod},
 are vector spaces which are algebras over an {\em operad ${\cA}$} and coalgebras over a
cooperad $\cC$. Moreover,  the operad $\cA$ and the cooperad $\cC$ are required to be related
by a distributive law. Since any operad $\cA$ yields a monad $\bT_\cA$ and $\cA$-algebras are
nothing else than $\bT_\cA$-modules, and similarly any cooperad $\cC$ yields a comonad $\bG_\cC$
and $\cC$-coalgebras are nothing else than $\bG_\cC$-comodules, generalised bialgebras have
interpretations in terms of bimodules over a bimonad in the sense of \cite{MW-Bim}.

The purpose of the present paper is to make this relationships more precise
(as proposed in \cite[2.3]{MW-Bim}).
We provide a theory for functors on fairly general categories
which leads to the Rigidity Theorem \cite[2.5.1]{Lod}
as a special case. The details of this application are described in Section \ref{Loday}.

\section{Comodules and adjoint functors}

In this section we provide basic notions and properties of
comodule functors and adjoint pairs of functors.
Throughout the paper $\A$ and $\B$ will denote any categories.

\begin{thm} \label{s.s.1.0}{\bf Monads and comonads.}  \em Recall
that a {\em monad} $\bT$ on $\A$ is a
triple $(T,m,e)$ where $T:\A\to \A$ is a functor with natural
transformations $m:TT\to T$, $e:1\to T$ satisfying associativity
and unitality conditions. A $\bT$-{\em module} is an object $a\in
\A$ with a morphism $h:T(a)\to a$ subject to associativity and
unitality conditions. The (Eilenberg-Moore) category of
$\bT$-modules is denoted by $\A_\bT$ and there is a free functor
$$\phi_\bT:\A\to \A_\bT,\; a\mapsto (T(a),m_a),$$
which is left adjoint
to the forgetful functor $$U_\bT:\A_\bT\to \A, \; (a, h) \mapsto a.$$

Dually, a {\em comonad} $\bG$ on $\A$ is a triple $(G,\delta,\ve)$
where $G:\A\to \A$ is a functor with natural transformations
$\delta:G\to GG$, $\ve:G\to 1$, and $\bG$-{\em comodules} are
objects $a\in \A$ with morphisms $\theta:a\to G(a)$. Both notions
are subject to coassociativity and counitality conditions. The
(Eilenberg-Moore) category of $\bG$-comodules is denoted by $\A^\bG$
and there is a cofree functor
$$\phi^\bG:\A\to \A^\bG,\; a\mapsto (G(a),\delta_a),$$
which is right adjoint to the forgetful functor
$$U^\bG:\A^\bG\to \A, \; (a,\theta) \mapsto a.$$
\end{thm}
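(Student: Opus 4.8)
The two assertions are formally dual to one another --- a comonad on $\A$ is the same thing as a monad on the opposite category $\A^{\mathrm{op}}$, under which comodules correspond to modules and ``right adjoint'' to ``left adjoint'' --- so the plan is to prove the statement for the monad $\bT$ and then read off the comonad case by dualisation. What has to be checked for $\bT$ is a bijection
$$\A_\bT\big(\phi_\bT(a),(b,h)\big)\;\cong\;\A\big(a,U_\bT(b,h)\big)=\A(a,b),$$
natural in $a\in\A$ and in $(b,h)\in\A_\bT$.

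I would construct this bijection explicitly. A morphism on the left is a morphism $f:T(a)\to b$ in $\A$ satisfying the module-morphism condition $f\circ m_a=h\circ T(f)$; to it I assign $f\circ e_a:a\to b$. Conversely, to $g:a\to b$ I assign $h\circ T(g):T(a)\to b$. First one checks that $h\circ T(g)$ really is a $\bT$-module morphism $(T(a),m_a)\to(b,h)$: this is a short diagram chase using naturality of $m$ together with the associativity axiom $h\circ m_b=h\circ T(h)$ of the module $(b,h)$. Then one checks that the two assignments are mutually inverse. The round trip $g\mapsto h\circ T(g)\mapsto h\circ T(g)\circ e_a$ collapses to $g$ by naturality of $e$ and the unit axiom $h\circ e_b=1_b$; the round trip $f\mapsto f\circ e_a\mapsto h\circ T(f\circ e_a)$ collapses to $f$ by using that $f$ is a module morphism together with the monad unit axiom $m_a\circ T(e_a)=1_{T(a)}$. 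Naturality in both variables is immediate from naturality of $m$ and $e$.

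Under this adjunction the unit is $e:1_\A\to U_\bT\phi_\bT=T$, and the counit at $(a,h)$ is the structure map $h\colon(T(a),m_a)=\phi_\bT U_\bT(a,h)\to(a,h)$, which is a $\bT$-module morphism precisely by the associativity axiom; the two triangle identities then reduce to the unitality axiom of the module $(b,h)$ and to the unitality axiom of the monad $\bT$, respectively. This identifies $\phi_\bT$ as left adjoint to $U_\bT$. Running the same argument in $\A^{\mathrm{op}}$ --- where $\bG$ becomes a monad, $\A^\bG$ becomes its Eilenberg--Moore category and $\phi^\bG$ becomes the corresponding free functor --- yields that $U^\bG$ is left adjoint to $\phi^\bG$ in $\A^{\mathrm{op}}$, i.e.\ that $\phi^\bG$ is right adjoint to $U^\bG$ in $\A$, with unit and counit given dually by the comodule structure maps and by $\ve$.

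I do not expect a genuine obstacle: the whole argument is diagram chasing with the monad and module axioms. The only point that needs a little care is bookkeeping --- keeping track of which axiom (monad versus module, associativity versus unitality) is invoked at each step, and correctly handling the reversal of variance when passing to the opposite category for the comonad statement.
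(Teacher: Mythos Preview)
Your argument is correct and is exactly the standard proof of the free--forgetful adjunction for the Eilenberg--Moore category; the duality reduction to the comonad case is also handled properly. In the paper itself, however, this item is not proved at all: it is a ``Recall'' paragraph collecting well-known definitions and facts, so there is no proof to compare against. Your write-up simply supplies the omitted (textbook) verification.
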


\begin{thm} \label{s.s.1.1}{\bf $\bG$-comodule functors.}
  {\em For a comonad $\bG=(G,\delta,\varepsilon)$ on
$\A$, a functor $F: \B \to \A$ is a {\em left $\bG$-comodule}
if there exists a natural transformation $\alpha_F : F \to GF$
inducing commutativity of the diagrams

$$\xymatrix{
F \ar@{=}[dr] \ar[r]^-{\alpha_F}& GF \ar[d]^-{\varepsilon F}\\
& F,} \qquad \xymatrix{
F \ar[r]^-{\alpha_F} \ar[d]_-{\alpha_F}& GF \ar[d]^-{\delta F}\\
GF \ar[r]_-{G \alpha_F}& GGF.}
$$

Symmetrically, one defines right $\bG$-comodules.}
\end{thm}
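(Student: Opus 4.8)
Statement \ref{s.s.1.1} is a definition, so what there is to establish is not an implication but the coherence of the notion and its translation into the Eilenberg--Moore framework of \ref{s.s.1.0}; that is the plan I would carry out.

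First I would record the defining example: the underlying endofunctor $G$ of the comonad $\bG$ is itself a left $\bG$-comodule (take $\B=\A$) with structure map $\alpha_G=\delta$, since then the left-hand triangle of \ref{s.s.1.1} is exactly the counit law $\varepsilon G\cdot\delta=\mathrm{id}_G$ and the right-hand square is exactly coassociativity $\delta G\cdot\delta=G\delta\cdot\delta$. More generally, for any $H:\B\to\A$ the composite $GH$ carries the left $\bG$-comodule structure $\delta H$; this is the ``cofree'' comodule functor on $H$, and it is what will pair with the adjunction $\phi^\bG\dashv U^\bG$.

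Next I would formulate and prove the companion statement that is really the point of the definition: left $\bG$-comodule structures $\alpha_F$ on a functor $F:\B\to\A$ are in natural bijection with liftings $\widetilde F:\B\to\A^\bG$ of $F$ through $U^\bG$, i.e. functors with $U^\bG\widetilde F=F$. In one direction, from $\alpha_F$ put $\widetilde F(b)=(F(b),(\alpha_F)_b)$: the counit triangle and the coassociativity square of \ref{s.s.1.1}, read pointwise at $b$, are precisely the two $\bG$-comodule axioms for $(F(b),(\alpha_F)_b)$, and naturality of $\alpha_F$ says precisely that $\widetilde F$ carries each morphism of $\B$ to a $\bG$-comodule morphism, hence is a functor. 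In the other direction, a lifting $\widetilde F$ assigns to each $b$ a comodule structure $\theta_b\colon F(b)\to GF(b)$, and functoriality of $\widetilde F$ forces $(\theta_b)_b$ to be natural, giving $\alpha_F$ back; the two passages are mutually inverse by construction. The same argument, with the structure map written $F\to FG'$ for a comonad $\bG'$ on $\B$, handles the symmetric (right $\bG$-comodule) notion.

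The only step needing any care --- and the nearest thing here to an obstacle --- is the bookkeeping translating naturality of $\alpha_F\colon F\Rightarrow GF$ into functoriality of the pointwise liftings; this is a routine diagram chase once one places the naturality square of $\alpha_F$ next to the definition of a $\bG$-comodule morphism, and it contains no real difficulty.
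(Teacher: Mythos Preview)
You correctly recognise that \ref{s.s.1.1} is a definition, not a theorem; the paper treats it the same way and offers no proof, so in that sense there is nothing to compare. The additional material you supply---the bijection between left $\bG$-comodule structures $\alpha_F$ on $F$ and liftings $\widetilde F:\B\to\A^\bG$ with $U^\bG\widetilde F=F$---is exactly the first equivalence recorded in the paper's next item \ref{s.s.1.2}, which the paper attributes to \cite{G} rather than proving in detail. Your argument for that bijection is correct and is the standard one; it just belongs to \ref{s.s.1.2} rather than to \ref{s.s.1.1}. One small quibble: your parenthetical description of the right $\bG$-comodule case (``structure map written $F\to FG'$ for a comonad $\bG'$ on $\B$'') is not the symmetric notion the paper has in mind; a \emph{right} $\bG$-comodule functor is $R:\A\to\B$ (note the direction) with $\beta_R:R\to RG$, still for the same comonad $\bG$ on $\A$, as used in \ref{s.s.1.2}.
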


\begin{thm}\label{s.s.1.2}{\bf $\bG$-comodules and adjoint functors.} \em
  Consider a comonad $\bG=(G,\delta,\varepsilon)$ on $\A$
 and an adjunction $F \dashv R:\A \to \B$ with counit $\sigma : FR \to 1$.

There exist bijective correspondences (see \cite{G}) between:
\begin{itemize}
  \item functors $K : \B \to \A^\bG$ with commutative diagrams
$$\xymatrix{ \B  \ar[r]^-{K}\ar[rd]_{F} & \A^\bG \ar[d]^{U^\bG}\\
& \A ;}$$
 \item left $\bG$-comodule structures $\alpha_{F} :F \to GF$ on $F$;

 \item comonad morphisms from the comonad
  generated by the adjunction $F \dashv R$ to the comonad $\bG$;

 \item right $\bG$-comodule structures $\beta_{R} :R \to RG$ on $R$.
\end{itemize}
\medskip

In this case,  $K(b)=(F(b), \alpha_{b})$ for some morphism $\alpha_{b}: F(b) \to GF(b)$,
and the collection $\{\alpha_b,\, b \in\B\}$ constitutes a natural transformation
$\alpha_F:F \to GF$
making $F$ a $\bG$-comodule. Conversely, if $(F,\alpha_F: F\to GF)$ is a $\bG$-module,
then $K: \B \to \A^\bG$ is defined by $K(b)=(F(b), (\alpha_F)_b)$.

\medskip

For any left $\bG$-comodule structure $\alpha_{F} :F \to GF$, the composite
$$\xymatrix{t:FR \ar[r]^-{\alpha_F R}& GFR \ar[r]^-{G \sigma }  & G} $$
is a comonad morphism from the comonad generated by the adjunction $F \dashv R$ to the comonad
$\bG$. Then the corresponding right $\bG$-comodule structure $\beta_{R} :R \to RG$ on $R$ is the composite
$R \xrightarrow{ \eta R}RFR \xrightarrow{R t}RG .$

Conversely, given a  right $\bG$-comodule structure $\beta_{R} :R \to RG$ on $R$, then the comonad
morphism $t:FR \to \bG$ is the composite
$$\xymatrix{FR \ar[r]^-{F \beta_R}& FRG \ar[r]^-{ \sigma G}  & G,}$$ while the corresponding left
$\bG$-comodule structure $\alpha_{F} :F \to GF$ on $F$ is the composite $F \xrightarrow{ F\eta }FRF \xrightarrow{t F}GF.$

\end{thm}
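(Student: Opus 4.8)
The plan is to run through the four items as a cycle $(1)\Rightarrow(2)\Rightarrow(3)\Rightarrow(4)\Rightarrow(1)$, producing at each step the explicit data named in the statement, and then to check that the resulting composites around the cycle are identities. Since every step is an explicit construction, verifying the round-trips amounts to bookkeeping with the triangle identities for $F \dashv R$ and the (co)unit laws of $\bG$.

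For $(1)\Leftrightarrow(2)$ one unwinds the definition of the Eilenberg--Moore category $\A^\bG$. A functor $K\colon\B\to\A^\bG$ with $U^\bG K=F$ is precisely a choice, for each $b\in\B$, of a morphism $\alpha_b\colon F(b)\to GF(b)$ subject to the counit and coassociativity laws of a $\bG$-comodule, such that for every arrow $f\colon b\to b'$ the morphism $F(f)$ is $\bG$-colinear; this last condition is exactly naturality of the family $\alpha_F=\{\alpha_b\}$. Hence $K\mapsto\alpha_F$ is a bijection onto left $\bG$-comodule structures on $F$, and the comodule diagrams for $\alpha_F$ are literally those required for each $\alpha_b$.

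For $(2)\Leftrightarrow(3)$, given $\alpha_F\colon F\to GF$ put $t=G\sigma\circ\alpha_FR\colon FR\to G$. Naturality of $\varepsilon$ together with the counit comodule law $\varepsilon F\circ\alpha_F=1_F$ gives $\varepsilon\circ t=\sigma$; naturality of $\delta$ together with the coassociativity comodule law $\delta F\circ\alpha_F=G\alpha_F\circ\alpha_F$ gives the comultiplication identity making $t$ a morphism from the comonad $(FR,F\eta R,\sigma)$ generated by $F\dashv R$ to $\bG$ (in this last chase one also uses the whiskered triangle identity $\sigma FR\circ F\eta R=1_{FR}$). Conversely, from such a $t$ set $\alpha_F=tF\circ F\eta$; the two comodule laws then follow from the two defining identities of the comonad morphism together with the triangle identity $\sigma F\circ F\eta=1_F$. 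The round-trip $\alpha_F\mapsto t\mapsto tF\circ F\eta$ returns $\alpha_F$ by naturality of $\alpha_F$ against $\eta$ and a triangle identity, and $t\mapsto\alpha_F\mapsto G\sigma\circ\alpha_FR$ returns $t$ by naturality of $t$ against $\sigma$ and a triangle identity.

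For $(3)\Leftrightarrow(4)$, a comonad morphism $t\colon FR\to G$ yields $\beta_R=Rt\circ\eta R\colon R\to RG$: the counit law $R\varepsilon\circ\beta_R=1_R$ is immediate from $\varepsilon\circ t=\sigma$ and $R\sigma\circ\eta R=1_R$, while coassociativity of $\beta_R$ as a right $\bG$-comodule comes from the comultiplication identity on $t$ and naturality of $\eta$; conversely $\beta_R\mapsto\sigma G\circ F\beta_R$ inverts this, again by the triangle identities. Substituting one passage into the previous one produces exactly the composites $\beta_R=RG\sigma\circ R\alpha_FR\circ\eta R$ and $\alpha_F=tF\circ F\eta$ with $t=\sigma G\circ F\beta_R$ recorded in the statement; alternatively one may simply invoke \cite{G} for the bijection $(1)\leftrightarrow(4)$. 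I expect the only genuine computations to be the two ``axiom-transport'' chases — coassociativity of $\alpha_F$ $\leftrightarrow$ the comultiplication identity on $t$ $\leftrightarrow$ coassociativity of $\beta_R$ — since these really use the comonad laws of $\bG$ and the interchange law, whereas everything else is formal manipulation with the triangle identities; so the difficulty is organisational rather than conceptual, the statement being a packaging of the doctrinal-adjunction/mate correspondence.
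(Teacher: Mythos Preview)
Your proposal is correct. The paper does not actually give a proof of this statement: it is presented as a recall, with the bijective correspondence attributed to \cite{G} and the explicit formulas for passing between $\alpha_F$, $t$, and $\beta_R$ simply recorded without verification. Your argument supplies exactly what such a citation would contain, namely the standard mate-correspondence computation using the triangle identities for $F\dashv R$ and the (co)unit laws of $\bG$; in particular your identification of the only substantive checks (transport of coassociativity along $(2)\leftrightarrow(3)\leftrightarrow(4)$) is accurate, and the remaining steps are, as you say, bookkeeping.
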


We need the following result, the dual version of Dubuc's theorem \cite{D}.

\begin{thm}\label{Dubuc}{\bf Dubuc's Adjoint Triangle Theorem.}
For categories $\A$, $\B$ and $\C$,
let $F: \A \to \B$ be a functor with right
adjoint $U$ with unit $\eta : 1 \to UF$, and let $K: \C \to \A$ be such that $F'=FK:\C \to \B$ has a right
adjoint with counit $\varepsilon':F'U'\to 1$.
Define
%Write $\alpha$ for the composite
$$\alpha: KU' \xrightarrow{\eta KU'}UFKU'=UF'U' \xrightarrow{U\varepsilon'}U.$$

If $\C$ has equalisers of coreflexive pairs and the functor
$F$ is of descent type, then $K$ has a right adjoint $\dK$ which can be calculated as the equaliser
$$\xymatrix{
\dK\ar[r] & U'F \ar[rr]^{U'F\eta} \ar[dr]|{\eta'U'F}& & U'FUF\\
           & & U'F'U'F=U'FKU'F \ar[ru]|{U'F\alpha F} . } $$
\end{thm}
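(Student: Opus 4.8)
The plan is to construct the right adjoint $\dK$ of $K$ directly as the stated equaliser and then exhibit the adjunction isomorphism. First I would check that the parallel pair whose equaliser defines $\dK$ is indeed coreflexive: the common retraction of $U'F\eta$ and $\eta' U'F$ is the map $U'\varepsilon' U'F : U'FUF \to U'F$ (using the triangle identity $\varepsilon' F' \cdot F'\eta' = 1$ after applying $U'$, respectively the triangle identity for the adjunction $F\dashv U$), so the equaliser exists in $\C$ by hypothesis; since $F$ is of descent type (equivalently, of effective-descent-free type in the dual formulation — precomposition data allow us to recognise algebras), the comparison functor for the monad $UF$ is fully faithful, which is what makes the equaliser compute the correct object. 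Concretely, for an object $b\in\B$ I would set $\dK(b)$ to be the equaliser in $\C$ of $U'F\eta_b$ and $\eta'_{U'F(b)}$ composed appropriately, and check functoriality in $b$.

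Next I would produce the unit and counit of $K\dashv\dK$. The counit $K\dK \to 1_\B$ should be built from the counit $\varepsilon' : F'U' \to 1_\B$ and the equaliser inclusion: applying $K$ to the defining equaliser of $\dK(b)$ and using that $F'=FK$, one gets a cone over a diagram that, after using $\varepsilon'$, maps canonically to $b$; naturality of all the structure maps gives a natural transformation $\varepsilon^\dK: K\dK\to 1$. For the unit $1_\C \to \dK K$, given $c\in\C$ I would use $\eta'_c : c \to U'F'(c) = U'FK(c)$ and verify, via the triangle identities and the explicit formula for $\alpha$, that this morphism equalises the relevant pair $U'F\eta_{K(c)}$ and $\eta'_{U'FK(c)}$, hence factors uniquely through the equaliser $\dK K(c)$; that factorisation is $\eta^\dK_c$. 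The two triangle identities for $(\eta^\dK,\varepsilon^\dK)$ then reduce, after applying the faithful functor coming from descent type, to the triangle identities for $\varepsilon'$ and for $F\dashv U$, which hold by assumption.

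The main obstacle I expect is not the formal bookkeeping but the precise use of the hypothesis that $F$ is of descent type. It is this condition (rather than mere faithfulness of $U$) that guarantees the equaliser in $\C$ is preserved and reflected in the right way, so that the candidate $\dK$ really is a right adjoint and not just a functor equipped with a natural transformation in one direction. I would therefore spend the bulk of the argument checking that the relevant diagrams become equalisers after applying $U$ (where they split by the comonad/monad structure), and then invoke descent type to pull the universal property back down to $\C$. Everything else — coreflexivity of the pair, naturality, and the triangle identities — is a routine diagram chase using the triangle identities of the two given adjunctions $F\dashv U$ and $F'\dashv U'$ together with the definition of $\alpha$. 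Since this is exactly the formal dual of Dubuc's original theorem in \cite{D}, I would present it as such, indicating the dualisation and filling in the points above rather than reproving the statement from scratch.
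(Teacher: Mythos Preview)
The paper does not prove this statement: it is quoted as a known result, attributed to Dubuc \cite{D} in its dual form, with no argument given. Your final sentence---present it as the formal dual of Dubuc's theorem and cite \cite{D}---is therefore exactly what the paper does, and your more detailed sketch goes well beyond what the paper provides.

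One small correction to your sketch: the common retraction witnessing coreflexivity of the pair $(U'F\eta,\; U'F\alpha F\cdot\eta'U'F)$ should be $U'\varepsilon F$, where $\varepsilon:FU\to 1$ is the counit of $F\dashv U$, not $U'\varepsilon' U'F$ as you wrote (the latter does not type-check as a map $U'FUF\to U'F$). With that fix, $U'\varepsilon F\cdot U'F\eta=1$ is immediate from the triangle identity for $F\dashv U$, and the other composite unwinds using the definition of $\alpha$ together with both triangle identities.
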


\smallskip

\begin{thm} \label{s.s.1.3}{\bf Right adjoint of $K$.} \em
Now fix a functor $K : \B \to \A^\bG$ with $U^\bG K=F$ and
suppose that the category $\B$ has equalisers of coreflexive pairs. It then follows from
Theorem \ref{Dubuc} that the functor $K$ has a right adjoint $\overline{R}$
which is determined by the equaliser diagram
\begin{equation}\label{e.1}\xymatrix{
\overline{R} \ar[r]^{i\quad} & R U^\bG
\ar@{->}@<0.5ex>[rr]^-{RU^\bG \eta^\bG} \ar@ {->}@<-0.5ex> [rr]_-{\beta_{R}
U^\bG }&& RGU^\bG=RU^\bG \phi^\bG U^\bG,}\end{equation}
where $\eta^\bG : 1 \to\phi^\bG U^\bG$ is the unit of the adjunction $U^\bG \dashv \phi^\bG.$

An easy inspection shows that the value of $\overline{R}$ at $(a, \theta)\in \A^\bG$ is given by the equaliser diagram
\begin{equation}\label{eq.}\xymatrix{\overline{R}(a, \theta)\ar[r]^-{i_{(a,\theta)}} &R(a) \ar@{->}@<0.5ex>[rr]^-{R(\theta)} \ar@
{->}@<-0.5ex> [rr]_-{(\beta_R)_a}& & RG(a)\,.}\end{equation}
\end{thm}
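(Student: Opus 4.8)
The plan is to deduce everything from Dubuc's Adjoint Triangle Theorem (Theorem \ref{Dubuc}) applied to the adjunction $U^\bG \dashv \phi^\bG : \A \to \A^\bG$ together with the given functor $K : \B \to \A^\bG$. First I would set up the data for Theorem \ref{Dubuc}: take the role of $F$ there to be $U^\bG$, its right adjoint $U$ to be $\phi^\bG$, the unit $\eta$ to be $\eta^\bG : 1 \to \phi^\bG U^\bG$, and the functor $K$ there to be our $K$. Then $F' = FK = U^\bG K = F$ by hypothesis, and since $F = U^\bG K$ has the right adjoint $R$ (the right adjoint from the adjunction $F \dashv R$ in play), the hypotheses of Theorem \ref{Dubuc} on $F'$ are satisfied with $U' = R$ and counit $\sigma : FR \to 1$ in the role of $\varepsilon'$. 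The category $\B$ has equalisers of coreflexive pairs by assumption, so the only remaining hypothesis is that $U^\bG$ be of descent type; but this is automatic, since $U^\bG$ is comonadic (it is the forgetful functor from an Eilenberg–Moore category of comodules), hence certainly of descent type. Therefore Theorem \ref{Dubuc} applies and yields a right adjoint $\overline{R}$ of $K$, computed as the equaliser displayed there.

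Next I would transcribe that equaliser into the form \eqref{e.1}. The equaliser from Theorem \ref{Dubuc} is of the pair $U'F\eta$ and $U'F\alpha F \circ \eta'U'F$ with domain $U'F = R U^\bG$ and codomain $U'FUF = R U^\bG \phi^\bG U^\bG = RGU^\bG$. The map $U'F\eta = R U^\bG \eta^\bG$ is exactly the upper arrow $RU^\bG\eta^\bG$ of \eqref{e.1}. For the lower arrow I would unwind the definition of $\alpha : KU' \to U$ from Theorem \ref{Dubuc} — here $\alpha : KR \to \phi^\bG$ — and use Theorem \ref{s.s.1.2} to identify the relevant composite with the right $\bG$-comodule structure $\beta_R : R \to RG$ on $R$ that corresponds to $K$. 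Concretely, chasing $\eta' U'F$ followed by $U'F\alpha F$ and using the triangle identities for $F \dashv R$ should collapse the second leg to $\beta_R U^\bG$; this is the one genuinely computational step, and it is the place where I expect the bookkeeping to be most delicate, since one must carefully match the abstract $\alpha$ of Dubuc's theorem against the explicit comodule structure recalled in Theorem \ref{s.s.1.2}. The equality $RGU^\bG = RU^\bG\phi^\bG U^\bG$ used in labelling the codomain is just the identity $G = U^\bG\phi^\bG$.

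Finally, for the pointwise description \eqref{eq.}, I would use that equalisers in a functor category, and in particular the equaliser defining $\overline{R}$ as a functor, are computed objectwise in $\B$ — more precisely, $\overline{R}$ as produced by Theorem \ref{Dubuc} is literally the pointwise equaliser, so evaluating \eqref{e.1} at an object $(a,\theta) \in \A^\bG$ gives the equaliser of $(RU^\bG\eta^\bG)_{(a,\theta)}$ and $(\beta_R U^\bG)_{(a,\theta)}$. It remains to identify these two component morphisms. The forgetful functor sends $(a,\theta)$ to $a$, so the domain is $R(a)$ and the codomain is $RG(a)$; the component of $\beta_R U^\bG$ at $(a,\theta)$ is $(\beta_R)_a$ by definition of horizontal composition; and the component of $RU^\bG\eta^\bG$ at $(a,\theta)$ is $R$ applied to the $\A^\bG$-component of $\eta^\bG$ at $(a,\theta)$, which by the standard description of the unit of $U^\bG \dashv \phi^\bG$ is precisely $\theta : a \to G(a)$. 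Substituting these in gives exactly diagram \eqref{eq.}, completing the verification. The only subtlety worth a sentence is confirming that the equaliser \eqref{e.1} really is formed pointwise; this follows because it is an equaliser of natural transformations into a functor with values in $\A$ where the relevant equalisers exist, transported along the equivalence recorded in Theorem \ref{s.s.1.2}, and ultimately from the fact that $\B$ has the coreflexive equalisers in question so that each component equaliser exists and assembles into a functor.
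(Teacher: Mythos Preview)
Your proposal is correct and follows exactly the route the paper indicates: the paper itself offers no detailed argument here, merely asserting that the result ``follows from Theorem~\ref{Dubuc}'' and that the pointwise form is obtained by ``easy inspection'', so you have simply unpacked those two sentences. Your identifications in Dubuc's theorem are the intended ones, and the key computation---that the second leg of Dubuc's equaliser collapses to $\beta_R U^\bG$ because $U^\bG\alpha$ equals the comonad morphism $t = G\sigma\cdot\alpha_F R$ and hence $R(U^\bG\alpha)U^\bG\cdot\eta' RU^\bG = (Rt\cdot\eta' R)U^\bG = \beta_R U^\bG$ via Theorem~\ref{s.s.1.2}---is exactly what ``easy inspection'' was meant to cover.
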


\begin{theorem} \label{fun.th.} \emph{(}see \cite[Theorem 4.4]{M}\emph{)} A functor $K : \B \to \A^\bG$ with $U^\bG K=F$ is an
equivalence of categories if and only if
\begin{itemize}
  \item [(i)] the functor $F$ is comonadic, and
  \item [(ii)] $t_K$ is an isomorphism of comonads.
\end{itemize}
\end{theorem}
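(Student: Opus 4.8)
The plan is to factor $K$ through the Eilenberg--Moore category of the comonad generated by $F$ and to analyse the two pieces separately. Note first that in either direction $F$ comes equipped with a right adjoint $R$: in the ``if'' direction this is part of condition~(i) (and is what makes $t_K$ meaningful), while in the ``only if'' direction $F=U^\bG K$ is the composite of an equivalence with $U^\bG$ and so has a right adjoint. Write $\eta,\sigma$ for the unit and counit of $F\dashv R$, let $\mathbb{FR}=(FR,F\eta R,\sigma)$ be the comonad induced on $\A$, and let $K_{\mathbb{FR}}:\B\to\A^{\mathbb{FR}}$, $b\mapsto(Fb,F\eta_b)$, be the associated comparison functor; by definition $F$ is comonadic precisely when $K_{\mathbb{FR}}$ is an equivalence. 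The comonad morphism $t_K:\mathbb{FR}\to\bG$ induces a change-of-comonad functor $\A^{t_K}:\A^{\mathbb{FR}}\to\A^\bG$ over $\A$, sending $(x,\rho:x\to FRx)$ to $(x,(t_K)_x\circ\rho)$, and change of comonad is functorial in the comonad morphism. The first point I would check is that $\A^{t_K}\circ K_{\mathbb{FR}}=K$; this unwinds the bijections of Theorem~\ref{s.s.1.2}, using the explicit formula $\alpha_F=t_K F\circ F\eta$ for the $\bG$-comodule structure on $F$ corresponding to $K$.

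Granting this factorisation, the ``if'' direction is short: if $t_K$ is an isomorphism of comonads, then $\A^{t_K}$ is an isomorphism of categories, with inverse $\A^{t_K^{-1}}$ (noting that $t_K^{-1}$ is again a comonad morphism), so $K=\A^{t_K}\circ K_{\mathbb{FR}}$ is an equivalence provided $K_{\mathbb{FR}}$ is one, i.e. provided $F$ is comonadic.

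For the ``only if'' direction, suppose $K$ is an equivalence. Then $F=U^\bG K$ is comonadic, being the composite of an equivalence with the comonadic forgetful functor $U^\bG$; this is a standard fact, readily verified from Beck's (dual) comonadicity theorem by transporting $U^\bG$-split coreflexive pairs and their equalisers along the equivalence $K$. Hence $K_{\mathbb{FR}}$ is an equivalence, and cancelling it in $K=\A^{t_K}\circ K_{\mathbb{FR}}$ shows $\A^{t_K}$ is an equivalence. It remains to deduce that $t_K$ is invertible. Since $U^\bG\circ\A^{t_K}=U^{\mathbb{FR}}$ with $\A^{t_K}$ an equivalence, the cofree functor $\phi^{\mathbb{FR}}$ (right adjoint of $U^{\mathbb{FR}}$) is canonically isomorphic to $(\A^{t_K})^{-1}\circ\phi^\bG$; applying $U^\bG$ and using $U^\bG\phi^\bG=G$ and $U^{\mathbb{FR}}\phi^{\mathbb{FR}}=FR$ produces a natural isomorphism $FR\to G$, which one identifies with $t_K$ by chasing the canonical isomorphisms (equivalently, one may compute the counit of $F\dashv K^{-1}\phi^\bG$ and match it against $\sigma$ and $t_K$). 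A comonad morphism that is invertible as a natural transformation is an isomorphism of comonads, so~(ii) follows.

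The two pieces of diagram-chasing --- the identity $\A^{t_K}\circ K_{\mathbb{FR}}=K$ and the comonadicity of $U^\bG K$ --- are routine. The point I expect to require genuine care, and hence the main obstacle, is the last step of the ``only if'' direction: passing from ``$\A^{t_K}$ is an equivalence over $\A$'' to ``$t_K$ is invertible'', and in particular confirming that the isomorphism $FR\cong G$ extracted abstractly from uniqueness of adjoints is genuinely $t_K$ rather than merely some isomorphism occupying the same slot.
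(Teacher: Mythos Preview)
The paper does not supply its own proof of this theorem; it is stated with a bare reference to \cite[Theorem 4.4]{M}. So there is nothing in the paper to compare your argument against, and your proposal must be assessed on its own merits.

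Your strategy is the standard one and is correct: factor $K=\A^{t_K}\circ K_{\mathbb{FR}}$, handle the ``if'' direction by noting that an isomorphism of comonads induces an isomorphism of Eilenberg--Moore categories, and handle the ``only if'' direction by first showing $F$ is comonadic (as a composite of an equivalence with the comonadic $U^\bG$) and then cancelling $K_{\mathbb{FR}}$ to conclude that $\A^{t_K}$ is an equivalence over $\A$. Your verification of the factorisation via $\alpha_F=t_K F\circ F\eta$ is exactly right.

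You are also right that the only step requiring care is the last one: extracting from ``$\A^{t_K}$ is an equivalence over $\A$'' that the specific natural transformation $t_K$ is invertible. Here is the clean way to close that gap. Since $U^\bG\circ\A^{t_K}=U^{\mathbb{FR}}$ and $\A^{t_K}$ is an equivalence, both $\phi^\bG$ and $\A^{t_K}\circ\phi^{\mathbb{FR}}$ are right adjoints of $U^\bG$. The canonical comparison isomorphism $\A^{t_K}\phi^{\mathbb{FR}}\to\phi^\bG$ furnished by uniqueness of right adjoints is, at $a\in\A$, the composite
\[
\phi^\bG(\sigma_a)\circ\eta^\bG_{\A^{t_K}\phi^{\mathbb{FR}}(a)}
\;=\; G\sigma_a\circ(t_K)_{FRa}\circ F\eta_{Ra},
\]
because $\sigma$ is the counit of $U^\bG\dashv\A^{t_K}\phi^{\mathbb{FR}}$ (it equals the counit of $U^{\mathbb{FR}}\dashv\phi^{\mathbb{FR}}$) and the unit $\eta^\bG$ at a $\bG$-coalgebra is its own structure map. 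By naturality of $t_K$ at $\sigma_a$ this composite collapses to $(t_K)_a\circ F(R\sigma_a\circ\eta_{Ra})=(t_K)_a$. Thus the abstractly-produced isomorphism is literally $t_K$, and you are done. (A small slip: where you write ``applying $U^\bG$'' you really mean applying $U^{\mathbb{FR}}$ and then using $U^{\mathbb{FR}}(\A^{t_K})^{-1}\cong U^\bG$.)
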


\section{Distributive laws}

Distributive laws were introduced by Beck in \cite{Be}. Here we are mainly interested
in the following case (e.g. \cite{FM} or \cite[5.3]{W-alg}).

\begin{thm} \em\label{f.entw} {\bf Mixed distributive laws.}
Let $\bT =(T,m,e)$ be a monad  and $\bG=(G,\delta,\ve)$ a comonad
on the category $\A$. A natural transformation
$$\lambda: TG\to GT$$ is said to be a
{\em  mixed distributive law} or a {\em (mixed) entwining} provided it induces commutative diagrams
$$
\xymatrix{
   TTG\ar[rr]^{mG} \ar[d]_{T\lambda} & & TG \ar[d]^\lambda \\
   TGT \ar[r]_{\lambda T} & GTT \ar[r]_{Gm} & GT ,}  \quad
     \xymatrix{
   TG\ar[r] ^{T\delta} \ar[d]_{\lambda} &  TGG \ar[r]^{\lambda G} &
     GTG \ar[d]^{G\lambda} \\
   GT \ar[rr]_{\delta T} & & GGT ,}   $$
   $$
    \xymatrix{ G\ar[r]^{eG} \ar[dr]_{Ge} & TG \ar[d]^\lambda \\
          & GT,}\quad
    \xymatrix{ TG\ar[r]^{T\ve} \ar[d]_{\lambda} & T  \\
           GT \ar[ru]_{\ve T}.}$$

Recall (for example, from \cite{W}) that if  $\bT$ is a monad and
$\bG$ is a comonad on a
category $\A$, then the following structures are in bijective
correspondence:
\begin{itemize}
\item mixed distributive laws $\lambda : TG \to GT$;

\item comonads $\widehat{\bG}=(\widehat{G}, \widehat{\delta}, \widehat{\varepsilon})$
on $\A_{\bT}$ that extend $\bG$ in the sense that \\
$U_\bT \widehat{G}= G U_\bT$, $U_\bT
\widehat{\varepsilon}=\varepsilon U_\bT$ and $U_\bT \widehat{\delta}=\delta
U_\bT$;

\item monads $\widehat{\bT}=(\widehat{T}, \widehat{m}, \widehat{e})$ on $\A^{\bG}$
 that extend $\bT$ in the sense that \\
$U^\bG \widehat{\bT}=T U^\bG$, $U^\bG \widehat{e}=e U^\bG$ and $U_\bG \widehat{m}=m U^\bG$.
\end{itemize}

Recall also that
\begin{center}
$\widehat{G}(a,
h)=(G(a), G(h) \cdot \lambda_a)$, $\widehat{\varepsilon}_{(a,
h)}=\varepsilon_a$, $\widehat{\delta}_{(a, h)}=\delta_a$, for
any $(a, h) \in \A_\bT$;
\end{center}
\begin{center}
$\widehat{T}(a, \theta)=(T(a),
\lambda_a \cdot T(\theta))$, $\widehat{e}_{(a, \theta)}=e_a$,
$\widehat{m}_{(a, \theta)}=m_a $ for any $(a, \theta) \in
\A^{\bG}$.
\end{center}

\smallskip

 It follows that for a mixed distributive law $\lambda : TG \to GT$ one may assume
 $$(\A^{\bG})_{\widehat{\bT}}=(\A_{\bT})^{\widehat{\bG}}.$$
We write $\A_{\bT}^{\bG}(\lambda)$ for this category, whose objects,
called $TG$-\emph{bimodules} in \cite{FM},
are triples $(a, h, \theta)$,
where $(a, h)\in \A_\bT$, $(a, \theta) \in \A^{\bG}$ with commuting diagram
$$
\xymatrix{ T(a) \ar[r]^{h} \ar[d]_{T(\theta)} & a \ar[r]^{\theta} & G(a)\\
TG(a) \ar[rr]_{\lambda_a} && GT(a) . \ar[u]_{G(h)} }
$$
Morphisms in this category are morphisms in $\A$ which are
$\bT$-module as well as $\bG$-comodule morphisms.
\end{thm}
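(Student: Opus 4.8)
The plan is to prove the statement in two stages: first the threefold bijection among mixed distributive laws $\lambda:TG\to GT$, comonads $\widehat\bG$ on $\A_\bT$ extending $\bG$, and monads $\widehat\bT$ on $\A^\bG$ extending $\bT$ (with the explicit formulas), and then the identification $(\A^\bG)_{\widehat\bT}=(\A_\bT)^{\widehat\bG}$ together with the stated description of the $TG$-bimodules. I would establish the bijection between $\lambda$'s and comonads $\widehat\bG$ in full, obtain the bijection between $\lambda$'s and monads $\widehat\bT$ by the formally dual argument (passing to $\A^{\mathrm{op}}$ interchanges monads and comonads and turns a mixed distributive law into one of the opposite type), and finally verify the identification of categories by unwinding definitions.

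For the passage from $\lambda$ to $\widehat\bG$ I would set $\widehat G(a,h)=(G(a),\,G(h)\cdot\lambda_a)$, $\widehat\delta_{(a,h)}=\delta_a$, $\widehat\varepsilon_{(a,h)}=\varepsilon_a$, and check by direct diagram chases that: $G(h)\cdot\lambda_a$ is a $\bT$-action — unitality from the axiom $\lambda\cdot eG=Ge$ together with unitality of $h$, and associativity from the axiom $Gm\cdot\lambda T\cdot T\lambda=\lambda\cdot mG$ together with naturality of $\lambda$ and associativity of $h$; that $\widehat G$ is functorial on $\A_\bT$, which is just naturality of $\lambda$; and that $\delta_a$, $\varepsilon_a$ are $\bT$-module morphisms between the relevant modules, which is exactly the content of the remaining two axioms of $\lambda$ plus naturality of $\delta$ and $\varepsilon$. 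Since $U_\bT\widehat G=GU_\bT$, $U_\bT\widehat\delta=\delta U_\bT$, $U_\bT\widehat\varepsilon=\varepsilon U_\bT$ and $U_\bT$ is faithful, the comonad identities for $(\widehat G,\widehat\delta,\widehat\varepsilon)$ reduce to those for $\bG$. Conversely, given a comonad $\widehat\bG$ on $\A_\bT$ extending $\bG$, I would evaluate on a free module: $\widehat G\phi_\bT(a)=\widehat G(Ta,m_a)=(GTa,\chi_a)$ for a unique action $\chi_a$, and set $\lambda_a:=\chi_a\cdot TG(e_a):TGa\to GTa$. Naturality of $\lambda$ in $a$ comes from functoriality of $\widehat G$ on the morphisms $\phi_\bT(f)=Tf$ together with naturality of $e$; applying $\widehat G$ to the $\bT$-module morphism $h:\phi_\bT(a)\to(a,h)$ and precomposing with $TG(e_a)$ (using $h\cdot e_a=\mathrm{id}$) shows that the action $\widehat G$ assigns to an arbitrary $(a,h)$ is forced to be $G(h)\cdot\lambda_a$, so the two constructions are mutually inverse; the four axioms of $\lambda$ are then read off the module axioms for $\chi$ and the naturality squares of $\widehat\varepsilon,\widehat\delta$ on free modules.

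For the identification of categories I would unwind the definitions: an object of $(\A_\bT)^{\widehat\bG}$ is an $(a,h)\in\A_\bT$ equipped with a $\widehat\bG$-comodule structure $\theta:(a,h)\to\widehat G(a,h)=(G(a),G(h)\cdot\lambda_a)$; the coassociativity and counitality of $\theta$ see only $\theta,\delta,\varepsilon$, hence say that $(a,\theta)\in\A^\bG$, while $\bT$-linearity of $\theta$ is precisely $\theta\cdot h=G(h)\cdot\lambda_a\cdot T(\theta)$, the $TG$-bimodule diagram. Symmetrically, an object of $(\A^\bG)_{\widehat\bT}$ is an $(a,\theta)\in\A^\bG$ with a $\widehat\bT$-module structure $h:\widehat T(a,\theta)=(T(a),\lambda_a\cdot T(\theta))\to(a,\theta)$; the module axioms for $h$ say $(a,h)\in\A_\bT$, and $\bG$-colinearity of $h$ is the same equation $\theta\cdot h=G(h)\cdot\lambda_a\cdot T(\theta)$. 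So both categories have exactly the triples $(a,h,\theta)$ with $(a,h)\in\A_\bT$, $(a,\theta)\in\A^\bG$ satisfying that compatibility as objects, and maps in $\A$ that are at once $\bT$-linear and $\bG$-colinear as morphisms; hence they coincide. The step I expect to be the main obstacle is the converse half of the first bijection — recovering $\lambda$ from an arbitrary extension $\widehat\bG$ and checking that the round trip is the identity — since there everything must be extracted from the behaviour of $\widehat G$ on free modules; by contrast the forward constructions and the category identification are bookkeeping in which each axiom of $\lambda$ corresponds, via faithfulness of $U_\bT$ and $U^\bG$, to a single extension condition.
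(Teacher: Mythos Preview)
The paper does not supply a proof of this item: it is stated as a definition together with a recollection of known facts, with the bijective correspondence explicitly attributed to \cite{W} (``Recall (for example, from \cite{W}) \ldots''), and the identification $(\A^{\bG})_{\widehat{\bT}}=(\A_{\bT})^{\widehat{\bG}}$ simply asserted as a consequence. So there is no proof in the paper to compare against.

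Your proposal is a correct reconstruction of the standard argument behind the cited result. The forward construction $\lambda\mapsto\widehat{\bG}$ and the verification that $\widehat G(a,h)=(G(a),G(h)\cdot\lambda_a)$ is a $\bT$-module, that $\delta_a,\varepsilon_a$ are $\bT$-linear, and that the comonad identities descend from those of $\bG$ via faithfulness of $U_\bT$, are exactly as you describe. Your converse --- extracting $\lambda_a=\chi_a\cdot TG(e_a)$ from the action $\chi_a$ that $\widehat G$ assigns to the free module $\phi_\bT(a)$, and using the $\bT$-module map $h:\phi_\bT(a)\to(a,h)$ together with $h\cdot e_a=\mathrm{id}$ to force $\rho_{(a,h)}=G(h)\cdot\lambda_a$ --- is the standard trick and is correct; the round-trip check $\lambda'_a=G(m_a)\cdot\lambda_{T(a)}\cdot TG(e_a)=\lambda_a$ follows from naturality of $\lambda$ and the unit law for $\bT$. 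Your unwinding of $(\A_\bT)^{\widehat\bG}$ and $(\A^\bG)_{\widehat\bT}$ into triples $(a,h,\theta)$ satisfying $\theta\cdot h=G(h)\cdot\lambda_a\cdot T(\theta)$ is also correct. In short, you have supplied the proof the paper chose to omit by citation, and the step you flagged as the main obstacle (recovering $\lambda$ from an arbitrary extension) is handled adequately by your free-module argument.
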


\begin{thm} \label{mon-comon}{\bf Entwined monads and comonads.} \em
Let $\bT=(T,m,e)$ be a monad, $\bG=(G,\delta,\ve)$ a comonad on $\A$,
and consider an entwining $\lambda : TG \to GT$ from $\bT$ to $\bG$.
Denote by $\widehat{\bT}=(\widehat{T}, \widehat{m},\widehat{e})$
the monad on $\A^\bG$ lifting  $\bT$
and by $\widehat{\bG}=(\widehat{G},\widehat{\delta},\widehat{\varepsilon})$
 the comonad on $\A_\bT$ lifting $\bG$.

Suppose there exists a functor $K: \A \to (\A_\bT)^{\widehat{\bG}}$
with commutative diagram
\begin{equation}\label{com.triangle}
\xymatrix{ \A  \ar[r]^-{K}\ar[rd]_{\phi_\bT} & (\A_\bT)^{\widehat{\bG}} \ar[d]^{U^{\widehat{\bG}}}\\
& \A_\bT}\end{equation} and consider the corresponding right $\widehat{\bG}$-comodule
structure on $U_\bT$ (see \ref{s.s.1.2})
$$\beta=\beta_{U_\bT} : U_\bT \to U_\bT \widehat{G}=G U_\bT.$$
 Then, for any $(a, h ) \in \A_\bT$, the
$(a,h)$-component $\beta_{(a, h )}=(\beta_{U_\bT})_{(a, h)}$ of $\beta$ is a morphism
$a \to G(a)$ in $\A$. Assuming that $\A$ admits coreflexive equalisers, we obtain by (\ref{eq.}) that
the functor $K$ admits a right adjoint $\dK$ whose value at
$((a,h),\theta)\in (\A_\bT)^{\widehat{\bG}}$ appears as the equaliser
\begin{equation}\label{eq.eq}
\xymatrix{\dK((a,h), \theta)\ar[rr]^-{i_{((a,h),\theta)}} &&a \ar@{->}@<0.5ex>[r]^-{\theta} \ar@
{->}@<-0.5ex> [r]_-{\beta_{(a,h)}}& G(a)\,.}
\end{equation}

Consider now the left $\widehat{\bG}$-comodule structure $\alpha=\alpha_{\phi_\bT} :  \phi_\bT
\to \widehat{G}\phi_\bT$ on $\phi_\bT$  induced by the commutative diagram (\ref{com.triangle}).
As shown in \cite[Theorem 2.4]{MW}, for any $(a,h) \in \A_\bT$, the component
$(t_K)_{(a,h)}$ of the comonad morphism $t_K : \phi_\bT U_\bT \to \widehat{\bG} $,
corresponding to the diagram (\ref{com.triangle}), is the composite
\begin{equation}\label{composite}
T(a) \xrightarrow{T(\beta_{(a, h)})} TG(a)
\xrightarrow{\lambda_a}  GT(a) \xrightarrow{G(h)} G(a)\,.
\end{equation}
\end{thm}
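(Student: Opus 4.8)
\smallskip
\noindent\textbf{Proof proposal.} The whole statement is obtained by specialising the machinery of Section~2 to the comonad $\widehat{\bG}$ on the category $\A_\bT$ and the Eilenberg--Moore adjunction $\phi_\bT\dashv U_\bT:\A_\bT\to\A$. First I would instantiate Theorem~\ref{s.s.1.2} with $\A_\bT$ in the role of ``$\A$'', $\A$ in the role of ``$\B$'', $\widehat{\bG}$ in the role of ``$\bG$'' and $\phi_\bT\dashv U_\bT$ in the role of ``$F\dashv R$''. Under this dictionary the commutative triangle (\ref{com.triangle}) is precisely a functor $K$ with $U^{\widehat{\bG}}K=\phi_\bT$, hence corresponds to a left $\widehat{\bG}$-comodule structure $\alpha_{\phi_\bT}:\phi_\bT\to\widehat{G}\phi_\bT$ and, equivalently, to a right $\widehat{\bG}$-comodule structure $\beta=\beta_{U_\bT}:U_\bT\to U_\bT\widehat{G}$. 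Since $\widehat{\bG}$ lifts $\bG$ we have $U_\bT\widehat{G}=GU_\bT$, so $\beta:U_\bT\to GU_\bT$, and evaluation at $(a,h)$ gives $\beta_{(a,h)}:U_\bT(a,h)=a\to GU_\bT(a,h)=G(a)$ in $\A$; this is the first assertion.

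For the right adjoint $\dK$ I would then apply Theorem~\ref{s.s.1.3} under the same dictionary: since ``$\B$''$=\A$ has coreflexive equalisers, $K$ acquires a right adjoint $\dK$ whose value at $((a,h),\theta)\in(\A_\bT)^{\widehat{\bG}}$ is the equaliser (\ref{eq.}), which here reads as the equaliser of $U_\bT(\theta)$ and $(\beta_{U_\bT})_{(a,h)}$, i.e.\ of the parallel pair $\theta,\beta_{(a,h)}:a\to G(a)$ --- exactly diagram (\ref{eq.eq}). For transparency one notes that this pair is coreflexive, the counit component $\varepsilon_a:G(a)\to a$ splitting both $\theta$ (by counitality of the $\widehat{\bG}$-comodule $((a,h),\theta)$, using $\widehat{\varepsilon}_{(a,h)}=\varepsilon_a$) and $\beta_{(a,h)}$ (by the counit axiom for the right $\widehat{\bG}$-comodule $(U_\bT,\beta_{U_\bT})$, using $U_\bT\widehat{\varepsilon}=\varepsilon U_\bT$), so the equaliser exists in $\A$; this is of course already built into Theorem~\ref{s.s.1.3}, itself resting on Dubuc's Theorem~\ref{Dubuc}.

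It remains to identify the component $(t_K)_{(a,h)}$. By the last part of Theorem~\ref{s.s.1.2}, the comonad morphism $t_K:\phi_\bT U_\bT\to\widehat{\bG}$ attached to (\ref{com.triangle}) equals, in terms of $\beta$, the composite
\[
\phi_\bT U_\bT \xrightarrow{\ \phi_\bT\beta\ } \phi_\bT U_\bT\widehat{G} \xrightarrow{\ \sigma\widehat{G}\ } \widehat{G},
\]
where $\sigma:\phi_\bT U_\bT\to 1_{\A_\bT}$ is the counit of $\phi_\bT\dashv U_\bT$. Evaluating at $(a,h)$ and inserting the explicit lifting data recalled in \ref{f.entw}: $(\phi_\bT\beta)_{(a,h)}=\phi_\bT(\beta_{(a,h)})$ has underlying $\A$-morphism $T(\beta_{(a,h)})$; $\widehat{G}(a,h)=(G(a),G(h)\cdot\lambda_a)$; and the counit of $\phi_\bT\dashv U_\bT$ at a $\bT$-module is its structure morphism, so $\sigma_{\widehat{G}(a,h)}$ has underlying morphism $G(h)\cdot\lambda_a$. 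Hence the underlying $\A$-morphism of $(t_K)_{(a,h)}$ is $(G(h)\cdot\lambda_a)\cdot T(\beta_{(a,h)})$, i.e.\ the composite (\ref{composite}); alternatively this is \cite[Theorem~2.4]{MW} read in the present situation.

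All of this is mechanical once the dictionary is fixed. The only points that demand care are to keep the two equivalent comodule structures $\alpha_{\phi_\bT}$ and $\beta_{U_\bT}$ (and the two equivalent presentations of $t_K$ from Theorem~\ref{s.s.1.2}) apart, and to substitute the explicit formulas for $\widehat{G}$ and for the counit $\sigma$ in the correct order in the last step --- an order-of-composition slip there would corrupt (\ref{composite}). I expect this final unwinding to be the main, though modest, obstacle.
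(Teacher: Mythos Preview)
Your proposal is correct and follows exactly the approach implicit in the paper: the statement is an expository specialisation of the general machinery of Section~2 to the adjunction $\phi_\bT\dashv U_\bT$ and the lifted comonad $\widehat{\bG}$, and you have accurately instantiated \ref{s.s.1.2} and \ref{s.s.1.3} under the correct dictionary. The only minor difference is that for the identification of $(t_K)_{(a,h)}$ the paper simply cites \cite[Theorem~2.4]{MW}, whereas you unwind the formula $t_K=\sigma\widehat{G}\cdot\phi_\bT\beta$ from \ref{s.s.1.2} directly using the explicit description of $\widehat{G}(a,h)$ and of the counit $\sigma$; your computation is sound and yields precisely (\ref{composite}).
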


\section{Grouplike morphisms}\label{groupl}

Let $\bG=(G,\delta,\varepsilon)$ be a comonad on a category $\A$. By \cite[Definition 3.1]{MW},
a natural transformation $g:1\to G$ is called a {\em grouplike morphism} provided it is a comonad
morphism from the identity comonad to $\bG$, that is, it induces
commutative diagrams
$$\xymatrix{ 1 \ar[r]^g \ar[dr]_= & G \ar[d]^\varepsilon \\
                & 1 ,}  \quad
  \xymatrix{ 1 \ar[r]^g \ar[dr]_{gg} & G \ar[d]^\delta \\
                & GG .} $$

The dual notion is that of \emph{augmentation}. A monad $\bT$ on $\A$ has an augmentation
if it is endowed with a monad morphism $T \to 1$.

Let $\bT=(T,m,e)$ be a monad and $\bG=(G,\delta,\varepsilon)$ a comonad on
 $\A$ with an entwining  $\lambda:TG\to GT$.
If $\bG$ has a grouplike morphism $g:1\to G$, then the above conditions guarantee
that the morphisms $(g_a:a \to G(a))_{(a,h)\in \A_\bT}$ form the components of a right $\widehat{\bG}$-comodule
structure $\beta=\beta_{U_\bT}:U_\bT \to U_\bT\widehat{G}$ on the functor  $U_\bT:\A_\bT \to \A.$

Observing that in the diagram
$$\xymatrix{ T(a) \ar[rr]^{T(g_a)} \ar[d]_{T(e_a)} && TG(a) \ar[rr]^{\lambda_a} \ar[d]_{TG(e_a)}&& GT(a) \ar[d]_{GT(e_a)} \ar@{=}[rrd] \\
TT(a) \ar[rr]_{T(g_{(T(a)})} && TGT(a) \ar[rr]_{\lambda_{T(a)}}&& GTT(a) \ar[rr]_{G(m_a)}&& GT(a)}$$
\begin{itemize}
  \item the left hand square commutes by naturality of $g$,
  \item the right hand square commutes by naturality of $\lambda$, and
  \item the triangle commutes since $e$ is the unit for the monad $\bT$,
\end{itemize} and recalling that $\alpha$ is the composite $\phi_\bT \xrightarrow{\phi_\bT \eta_\bT}\phi_\bT U_\bT
\phi_\bT \xrightarrow{t_K \phi_\bT} \widehat{G}\phi_\bT$, one concludes by (\ref{composite}) that
\begin{equation}\label{e.2.2}
\text{for every } a\in \A, \,\,\alpha_{a}=\lambda_a \cdot T(g_a).
\end{equation}

This leads to a functor
$$K_g: \A \to (\A_\bT)^{\widehat{\bG}}, \quad a \longmapsto((T(a), m_a), \lambda_a \cdot T(g_a)),
$$
 and the commutative diagram
\begin{equation}\label{comparison}
\xymatrix{
\A \ar[r]^-{K_g} \ar[dr]_{\phi_\bT}& (\A_\bT)^{\widehat{\bG}}
\ar[d]^{U^{\widehat{\bG}}}\\
&\A_\bT .}
\end{equation}

In this case we say that the comparison functor $K_g$ is induced by the grouplike morphism $g:1 \to G$.

Specialising now Theorem \ref{fun.th.} to the present situation gives

\begin{theorem} \label{fun.th.1} Let $\bT=(T,m,e)$ be a monad and $\bG=(G,\delta,\varepsilon)$ a comonad on
$\A$ with an entwining  $\lambda:TG\to GT$. If $g:1\to G$ is a grouplike morphism of the comonad $\bG$,
then the induced  functor $K_g:\A \to (\A_\bT)^{\widehat{\bG}}$ is an equivalence of
categories if and only if
\begin{itemize}
  \item [(i)] the functor $\phi_\bT$ is comonadic, and
  \item [(ii)] the composite \begin{equation}\label{composite.1}
T(a) \xrightarrow{T(g_a)} TG(a)
\xrightarrow{\lambda_a}  GT(a) \xrightarrow{G(h)} G(a)
\end{equation} is an isomorphism for every $(a,h) \in \A_\bT$.
\end{itemize}
\end{theorem}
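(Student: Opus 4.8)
The plan is to apply Theorem \ref{fun.th.} directly to the comparison functor $K_g$ and translate its two hypotheses into the stated form. Since $K_g$ is a functor $\A \to (\A_\bT)^{\widehat{\bG}} = (\A^{\bG})_{\widehat{\bT}}$ fitting into the commutative triangle \eqref{comparison} with $U^{\widehat{\bG}} K_g = \phi_\bT$, Theorem \ref{fun.th.} applies with $\B = \A$, the base category there being $\A_\bT$, the comonad there being $\widehat{\bG}$, and the functor ``$F$'' there being $\phi_\bT$. Thus $K_g$ is an equivalence if and only if (i) $\phi_\bT$ is comonadic, and (ii) the comonad morphism $t_{K_g} : \phi_\bT U_\bT \to \widehat{\bG}$ associated with the triangle \eqref{comparison} is an isomorphism of comonads. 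Condition (i) is already exactly condition (i) of the statement, so no work is needed there.

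The remaining task is to identify condition (ii). First I would invoke the computation recorded in \ref{mon-comon}, specifically formula \eqref{composite}: for the comparison functor arising from a triangle of the form \eqref{com.triangle}, the $(a,h)$-component of $t_K$ is the composite $T(a)\xrightarrow{T(\beta_{(a,h)})}TG(a)\xrightarrow{\lambda_a}GT(a)\xrightarrow{G(h)}G(a)$. In the present situation the right $\widehat{\bG}$-comodule structure $\beta = \beta_{U_\bT}$ on $U_\bT$ has components $\beta_{(a,h)} = g_a$, as established in the discussion preceding \eqref{e.2.2} using naturality of $g$ and $\lambda$ together with the unit law of $\bT$. Substituting $\beta_{(a,h)} = g_a$ into \eqref{composite} yields that $(t_{K_g})_{(a,h)}$ is precisely the composite \eqref{composite.1}. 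Hence ``$t_{K_g}$ is an isomorphism of comonads'' amounts to the assertion that the natural transformation with components \eqref{composite.1} is invertible.

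The only subtle point is that Theorem \ref{fun.th.} asks $t_{K_g}$ to be an isomorphism \emph{of comonads}, whereas condition (ii) of the statement merely asks that each component \eqref{composite.1} be an isomorphism in $\A$. These are equivalent: a morphism of comonads that is a componentwise isomorphism automatically has a two-sided inverse which is again a comonad morphism (the inverse natural transformation respects the comultiplications and counits because the comonad-morphism axioms are equalities of natural transformations and can be conjugated by the isomorphism). So checking invertibility componentwise suffices, and conversely an isomorphism of comonads is in particular a componentwise isomorphism. I would state this equivalence in one line. With (i) and (ii) thus matched against the hypotheses of Theorem \ref{fun.th.}, the proof is complete.

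I expect the main (indeed only) obstacle to be purely bookkeeping: making sure the roles of the categories and functors in Theorem \ref{fun.th.} are assigned correctly — that $\A_\bT$ plays the role of the base, $\widehat{\bG}$ the comonad, and $\phi_\bT$ the functor denoted $F$ there — and that the identification $(t_{K_g})_{(a,h)} = \eqref{composite.1}$ genuinely follows from \eqref{composite} once one substitutes the already-computed value $\beta_{(a,h)} = g_a$. There is no genuine analytic difficulty; everything reduces to the cited results \ref{mon-comon} and Theorem \ref{fun.th.} plus the grouplike discussion culminating in \eqref{e.2.2}.
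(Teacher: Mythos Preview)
Your proposal is correct and follows exactly the paper's approach: the paper simply says ``Specialising now Theorem \ref{fun.th.} to the present situation gives'' and states the theorem, leaving the identification of $(t_{K_g})_{(a,h)}$ with \eqref{composite.1} implicit via \eqref{composite} and the fact $\beta_{(a,h)}=g_a$. Your treatment is in fact more careful than the paper's, since you explicitly note why ``isomorphism of comonads'' in Theorem \ref{fun.th.}(ii) reduces to the componentwise condition stated here.
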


\begin{thm} \label{rem.1}{\bf Remark.} \em
It follows from \cite[Theorem 2.12]{MW-Op} that the second
condition of Theorem \ref{fun.th.1} is equivalent to saying that the composite
$$TT(a) \xrightarrow{T(g_{T(a)})} TGT(a)
\xrightarrow{\lambda_{T(a)}}  GTT(a) \xrightarrow{G(m_a)} GT(a)$$ is an isomorphism
for every $a\in \A$.
\end{thm}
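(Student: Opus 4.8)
The plan is to recognise the two displayed composites as the components of a single natural transformation, evaluated respectively at arbitrary and at free $\bT$-modules, and then to reduce invertibility on all of $\A_\bT$ to invertibility on the free modules by exploiting the monadicity of $\phi_\bT \dashv U_\bT$. First I would record that, by the formula (\ref{composite}) of \ref{mon-comon} together with the identity $\beta_{(a,h)} = g_a$ noted in (\ref{e.2.2}), the composite (\ref{composite.1}) appearing in condition (ii) of Theorem \ref{fun.th.1} is exactly the underlying morphism in $\A$ of the $(a,h)$-component of the comonad morphism $t_{K_g}:\phi_\bT U_\bT \to \widehat{\bG}$. Forgetting comodule structures, $t_{K_g}$ is a natural transformation $\chi : T U_\bT \to G U_\bT$ of functors $\A_\bT \to \A$, with $\chi_{(a,h)} = G(h)\cdot\lambda_a\cdot T(g_a)$.

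With this identification, one direction is immediate: evaluating $\chi$ at the free module $\phi_\bT(a)=(T(a),m_a)$ amounts to substituting $h=m_a$ and replacing $a$ by $T(a)$ in (\ref{composite.1}), which produces precisely the composite displayed in the Remark. Hence if $\chi_{(a,h)}$ is invertible for every $(a,h)$, it is in particular invertible at every free module, i.e. the Remark's composite is an isomorphism for every $a\in\A$. For the substantive converse I would use that the Eilenberg--Moore adjunction is monadic, so every $(a,h)\in\A_\bT$ is the canonical $U_\bT$-split coequaliser of free modules
$$\phi_\bT(T(a)) \overset{m_a}{\underset{T(h)}{\rightrightarrows}} \phi_\bT(a) \xrightarrow{\;h\;} (a,h),$$
whose image under $U_\bT$ is the split coequaliser $TT(a)\rightrightarrows T(a)\to a$ in $\A$, with splittings provided by $e_a$ and $e_{T(a)}$.

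Since this coequaliser is split, it is \emph{absolute}, and therefore preserved by both functors $T U_\bT$ and $G U_\bT$ (any functor preserves split coequalisers). Naturality of $\chi$ then yields a commuting ladder between the two resulting coequaliser diagrams in $\A$, in which the two vertical maps over the resolving terms are $\chi_{\phi_\bT(T(a))}$ and $\chi_{\phi_\bT(a)}$, that is, the Remark's composite evaluated at $T(a)$ and at $a$. By hypothesis these two maps are isomorphisms, so the two parallel pairs are isomorphic via the verticals; the map induced on their coequalisers is then an isomorphism, and this map is exactly $\chi_{(a,h)}$. Thus $\chi_{(a,h)}$ is invertible for all $(a,h)$, which is condition (ii). This reduction is precisely the content extracted from \cite[Theorem 2.12]{MW-Op}.

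I expect the preservation step to be the only real obstacle: the functors $T$ and $G$ are not assumed to preserve arbitrary coequalisers, so the argument depends essentially on presenting each module as an \emph{absolute} (split) coequaliser of frees, the splittings being furnished by the unit $e$ of the monad $\bT$. Once that is in place the ladder argument is routine, and the equivalence of the two conditions follows.
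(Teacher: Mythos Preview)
Your argument is correct. The paper itself does not prove this Remark at all: it simply records that the equivalence follows from \cite[Theorem~2.12]{MW-Op} and moves on. What you supply is a direct, self-contained proof of that equivalence in this particular setting, rather than an appeal to the general result. The key mechanism you use---presenting an arbitrary $(a,h)\in\A_\bT$ as the $U_\bT$-split (hence absolute) coequaliser of free $\bT$-modules and then running the naturality ladder for $\chi=U_\bT t_{K_g}$---is exactly the idea underlying the cited theorem, so the two approaches agree in spirit; yours simply unpacks it explicitly. One minor point: your reference to (\ref{e.2.2}) for the identity $\beta_{(a,h)}=g_a$ is slightly off---that equation records $\alpha_a=\lambda_a\cdot T(g_a)$, while $\beta_{(a,h)}=g_a$ is stated a few lines earlier in Section~\ref{groupl}. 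This does not affect the mathematics.
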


\section{Compatible entwinings}\label{Compat}

Let $\underline{\bH}=(H, m, e)$ be a monad, $\overline{\bH}=(H, \delta,
\varepsilon)$ a comonad on $\A$, and let $\lambda : HH\to HH$
be an entwining from the monad $\underline{\bH}$ to the comonad $\overline{\bH}$.
The datum $(\underline{\bH},\overline{\bH},\lambda)$ is called a {\em monad-comonad
triple}. The objects of the category $\A^{\overline{\bH}}_{\underline{\bH}}(\lambda)$
are called {\em (mixed) $\lambda$-bimodules}.

\begin{lemma}\label{L.1}
The triple $(H(a), m_a, \delta_a)$ is a $\lambda$-bimodule
for all $a \in \A$ if and only if we have a commutative diagram
\begin{equation}\label{e.3}
\xymatrix{ HH \ar[r]^m \ar[d]_{H\delta} & H \ar[r]^\delta &HH\\
HHH \ar[rr]_{\lambda H} & & HHH \ar[u]_{Hm}. }
\end{equation}
In this case, there are functors
\begin{zlist}
\item $K:\A \to (\A_{\underline{\bH}})^{\widehat{\overline{\bH}}}, \;
a \longmapsto ((H(a),m_a), \delta_a),$
satisfying $\phi_{\underline{\bH}}=U^{\widehat{\overline{\bH}}}K$;

\item $K':\A \to (\A^{\overline{\bH}})_{\widehat{\underline{\bH}}}, \;
 a \longmapsto ((H(a),\delta_a), m_a)$,
 satisfying $ U_{\widehat{\underline{\bH}}} K'=\phi^{\overline{\bH}}$.
\end{zlist}
\end{lemma}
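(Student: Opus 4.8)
The plan is to establish the two directions of the equivalence and then read off the two functors. For the ``only if'' part, suppose the triple $(H(a), m_a, \delta_a)$ is a $\lambda$-bimodule for every $a\in\A$. By definition of $\A^{\overline{\bH}}_{\underline{\bH}}(\lambda)$ this means precisely that the diagram
$$\xymatrix{ H(a) \ar[r]^{m_a} \ar[d]_{H(\delta_a)} & a \ar[r]^{\delta_a} & H(a)\\
HH(a) \ar[rr]_{\lambda_a} && HH(a) \ar[u]_{H(m_a)} }$$
commutes for each $a$. Naturality of $m$, $\delta$ and $\lambda$ in $a$ then assembles these component diagrams into the single diagram of natural transformations (\ref{e.3}); conversely, evaluating (\ref{e.3}) at an object $a$ recovers the bimodule compatibility for $(H(a),m_a,\delta_a)$. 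Thus the stated equivalence is essentially unwinding the definition, with naturality supplying the passage between the ``pointwise'' and ``global'' formulations. I expect no real obstacle here beyond bookkeeping.

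Assume now (\ref{e.3}) holds. To produce $K$, I first observe that sending $a\mapsto (H(a),m_a)$ is just the free functor $\phi_{\underline{\bH}}:\A\to\A_{\underline{\bH}}$, and that $\delta_a: H(a)\to H H(a)$ endows each free module with a $\widehat{\overline{\bH}}$-comodule structure: indeed by \ref{f.entw} the comonad $\widehat{\overline{\bH}}$ on $\A_{\underline{\bH}}$ has $\widehat{G}(a,h)=(H(a), H(h)\cdot\lambda_a)$, so $\delta_a$ is a $\widehat{\overline{\bH}}$-comodule structure on $\phi_{\underline{\bH}}(a)=(H(a),m_a)$ exactly when (i) it is a comodule for $\overline{\bH}$ in $\A$ — automatic from the comonad axioms of $\overline{\bH}$ — and (ii) it is a morphism of $\underline{\bH}$-modules from $(H(a),m_a)$ to $\widehat{G}(H(a),m_a)=(HH(a), H(m_a)\cdot\lambda_{H(a)})$. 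Chasing the latter $\underline{\bH}$-module morphism condition and using the naturality square for $\delta$ against $m_a$, one sees it is equivalent to the commutativity of (\ref{e.3}). Coassociativity and counitality of the comodule structure are inherited from those of $\delta$. This defines $K(a)=((H(a),m_a),\delta_a)$, and $U^{\widehat{\overline{\bH}}}K=\phi_{\underline{\bH}}$ by construction since $U^{\widehat{\overline{\bH}}}$ forgets the comodule structure. Functoriality of $K$ on morphisms follows because, for $f:a\to b$ in $\A$, $H(f)$ is simultaneously an $\underline{\bH}$-module morphism (naturality of $m$) and an $\overline{\bH}$-comodule morphism (naturality of $\delta$).

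The construction of $K'$ is entirely dual: one sends $a\mapsto (H(a),\delta_a)=\phi^{\overline{\bH}}(a)$, the cofree $\overline{\bH}$-comodule, and equips it with the $\underline{\bH}$-module structure $m_a: H(a)\to H(a)$. By the symmetric part of \ref{f.entw}, $\widehat{T}(a,\theta)=(H(a), \lambda_a\cdot H(\theta))$, so $m_a$ is a $\widehat{\underline{\bH}}$-module structure on $\phi^{\overline{\bH}}(a)$ precisely when $m_a$ is an $\overline{\bH}$-comodule morphism $\widehat{T}(H(a),\delta_a)=(HH(a),\lambda_{H(a)}\cdot H(\delta_a))\to (H(a),\delta_a)$, and unwinding this again yields (\ref{e.3}) (the same diagram, read with the roles of $m$ and $\delta$ interchanged, which is exactly the symmetry of (\ref{e.3})). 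Associativity and unitality come from $m$, and $U_{\widehat{\underline{\bH}}}K'=\phi^{\overline{\bH}}$ holds by construction. The main thing to get right — and the only place any care is needed — is checking that the single diagram (\ref{e.3}) is simultaneously the ``module-morphism'' condition for the comodule structure in $K$ and the ``comodule-morphism'' condition for the module structure in $K'$; this is where the self-symmetry of (\ref{e.3}) under swapping $(m,\delta)$ is used, and once that is observed both claims drop out together.
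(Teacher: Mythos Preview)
Your approach is correct and matches what the paper has in mind; the paper in fact states this lemma without proof, treating it as an immediate unwinding of the definitions. One slip to fix: in your displayed bimodule diagram the underlying object of the $\lambda$-bimodule $(H(a),m_a,\delta_a)$ is $H(a)$, not $a$, so the top row should be $HH(a)\xrightarrow{m_a} H(a)\xrightarrow{\delta_a} HH(a)$, the bottom row $HHH(a)\to HHH(a)$, and the bottom arrow $\lambda_{H(a)}$ rather than $\lambda_a$. With that correction the diagram is literally (\ref{e.3}) evaluated at $a$, and no auxiliary ``naturality square for $\delta$ against $m_a$'' is needed --- the $\underline{\bH}$-module morphism condition $\delta_a\cdot m_a = H(m_a)\cdot\lambda_{H(a)}\cdot H(\delta_a)$ is already (\ref{e.3}) at $a$ verbatim.
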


\begin{thm}\label{def-comp}{\bf Definitions.} \em
Given a monad-comonad triple $(\underline{\bH},\overline{\bH},\lambda)$,
the entwining $\lambda : HH\to HH$
is said to be {\em compatible} provided Diagram (\ref{e.3}) is commutative;
 then $(\underline{\bH},\overline{\bH},\lambda)$ is said to be a {\em compatible monad-comonad triple}.

The triple $(\underline{\bH},\overline{\bH},\lambda)$ is called a {\em bimonad}
if it is a compatible triple with additional commutative diagrams
(see \cite[Definition 4.1]{MW-Bim})
\begin{equation}\label{D.1.18b}
 \xymatrix{ HH \ar@{}[rd]|{(i)}\ar[r]^{H\varepsilon} \ar[d]_m & H  \ar[d]^\varepsilon \\
 H \ar[r]^\varepsilon & 1 , } \qquad
\xymatrix{
1 \ar@{}[rd]|{(ii)} \ar[r]^-{e} \ar[d]_-{e} & H \ar[d]^-{\delta}\\
H \ar[r]_-{He}& HH,}  \qquad
\xymatrix{ 1 \ar[r]^e \ar@/_1pc/ [dr]_= & H \ar@{}[ld]|(0.45){(iii)}\ar[d]^\ve\\
         & 1 .}
\end{equation}

Notice that for any monad-comonad triple $(\underline{\bH},\overline{\bH},\lambda)$,
to say that Diagram (\ref{D.1.18b})(i) commutes is to say that $\ve: H \to 1$
is an augmentation of the monad $\underline{\bH}$\,, while to say that Diagram (\ref{D.1.18b})(ii)
commutes is to say that $e:1 \to H$ is a grouplike morphism of the comonad $\overline{\bH}$.
Thus, for any bimonad $(\underline{\bH},\overline{\bH},\lambda)$,
$e$ is a grouplike morphism of the comonad $\overline{\bH}$ and $\ve$ is an augmentation
of the monad $\underline{\bH}$.
\end{thm}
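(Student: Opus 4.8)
The substantive content of Definition~\ref{def-comp} is the pair of equivalences in the ``Notice'' paragraph and the concluding ``Thus'' assertion; the remaining content is terminological. The plan is to unpack the definitions of an augmentation and of a grouplike morphism into their constituent equations and then to read each of the diagrams in (\ref{D.1.18b}) as exactly one such equation.

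Recall that an augmentation of $\underline{\bH}=(H,m,e)$ is a monad morphism $\varepsilon\colon H\to 1$ into the identity monad; since the identity monad has trivial unit and multiplication, this amounts to the two equations $\varepsilon\cdot e=\mathrm{id}_1$ (unit compatibility) and $\varepsilon\cdot m=\varepsilon\cdot H\varepsilon$ (multiplication compatibility), the latter using that the horizontal composite $\varepsilon\ast\varepsilon$ may be written as $\varepsilon\cdot H\varepsilon$. Dually, by the definition recalled in Section~\ref{groupl}, $e\colon 1\to H$ is a grouplike morphism of $\overline{\bH}=(H,\delta,\varepsilon)$ exactly when $\varepsilon\cdot e=\mathrm{id}_1$ (counit compatibility) and $\delta\cdot e=He\cdot e$ (comultiplication compatibility), the latter using $e\ast e=He\cdot e$. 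The key point to isolate is that the equation $\varepsilon\cdot e=\mathrm{id}_1$ is shared: it is simultaneously the unit half of ``$\varepsilon$ is an augmentation'' and the counit half of ``$e$ is grouplike''.

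First I would verify by direct inspection that (\ref{D.1.18b})(i) states precisely $\varepsilon\cdot m=\varepsilon\cdot H\varepsilon$, that (\ref{D.1.18b})(ii) states precisely $\delta\cdot e=He\cdot e$, and that (\ref{D.1.18b})(iii) states precisely $\varepsilon\cdot e=\mathrm{id}_1$. This establishes the two equivalences of the ``Notice'' sentence, with the understanding that each invokes the shared unit/counit equation (iii): commutativity of (i) together with (iii) is equivalent to $\varepsilon$ being an augmentation, and commutativity of (ii) together with (iii) is equivalent to $e$ being a grouplike morphism. The concluding ``Thus'' assertion is then immediate: in a bimonad all three diagrams commute by hypothesis, so (i)$+$(iii) make $\varepsilon$ an augmentation of $\underline{\bH}$ and (ii)$+$(iii) make $e$ a grouplike morphism of $\overline{\bH}$. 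No genuine obstacle arises; the only points requiring care are the bookkeeping of horizontal composites ($\varepsilon\ast\varepsilon=\varepsilon\cdot H\varepsilon$ and $e\ast e=He\cdot e$, in the forms drawn) and the recognition that (iii) is common to both structures, so that neither of the two equivalences is complete without it.
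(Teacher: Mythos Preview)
The paper provides no proof for this item; it is presented as a definition together with an observation the reader is expected to verify by unwinding the meaning of ``augmentation'' and ``grouplike morphism.'' Your proposal does exactly this unwinding and is correct.

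In fact you are slightly more careful than the paper. A monad morphism $\varepsilon\colon H\to 1$ must respect both multiplication and unit, so ``$\varepsilon$ is an augmentation'' is literally the conjunction of diagrams~(i) and~(iii), not~(i) alone; dually, ``$e$ is grouplike'' is the conjunction of~(ii) and~(iii). The paper's ``Notice'' sentence, read literally for an arbitrary monad--comonad triple, conflates (i) alone with the full augmentation condition (and likewise for~(ii)); you have correctly isolated~(iii) as the shared half of both notions. Since the ``Thus'' clause is stated only for bimonads, where all three diagrams hold by hypothesis, the conclusion is unaffected, and your argument for it is exactly right.
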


\begin{proposition} \label{Hopf.1}
Let $(\underline{\bH},\overline{\bH},\lambda)$ be a compatible monad-comonad triple.
If $\delta \cdot e =He \cdot e$ (i.e. $e:1 \to H$ is a grouplike morphism of $\overline{\bH}$),
then $\delta=\lambda\cdot He$ and the comparison functor $K$ in Lemma \ref{e.3} is induced by
the grouplike morphism $e$, that is  $K=K_e$.
\end{proposition}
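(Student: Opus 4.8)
The plan is to verify the two assertions separately, using the explicit formula $(\ref{e.2.2})$ for the comodule structure induced by a grouplike morphism, together with the description $(\ref{composite})$ of the associated comonad morphism. First I would observe that the hypothesis $\delta\cdot e = He\cdot e$ is exactly the statement (by $(\ref{D.1.18b})$(ii) and the remark following it) that $e:1\to H$ is a grouplike morphism of the comonad $\overline{\bH}$, so that the machinery of Section~\ref{groupl} applies verbatim with $\bT=\underline{\bH}$, $\bG=\overline{\bH}$, $g=e$. The content to prove is then twofold: $(\mathrm a)$ that the compatible entwining forces $\delta = \lambda\cdot He$; and $(\mathrm b)$ that the functor $K$ of Lemma~\ref{L.1}(1), namely $a\mapsto((H(a),m_a),\delta_a)$, coincides with the functor $K_e$ of Section~\ref{groupl}, namely $a\mapsto((H(a),m_a),\lambda_a\cdot H(e_a))$.

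For $(\mathrm a)$, I would chase Diagram $(\ref{e.3})$, precomposing along the unit. Consider the naturality square for $\lambda$ applied to $e_a:a\to H(a)$, giving $\lambda_{H(a)}\cdot He(e_a)$-type relations, but more directly: in $(\ref{e.3})$ the map $H\delta:HH\to HHH$ composed with $eH$ at $a$ uses $H(\delta_a)$, and $m$ composed with $eH$ is the identity of $H(a)$ by the unit law, so the left-bottom path of $(\ref{e.3})$ evaluated after $H(e_a):H(a)\to HH(a)$ collapses. Precisely, since $e$ is the unit of $\underline{\bH}$ one has $m_a\cdot H(e_a)=\mathrm{id}_{H(a)}$, so the outer square of $(\ref{e.3})$ pre-composed with $H(e_a)$ yields on one side $\delta_a$ and on the other side $H(m_a)\cdot\lambda_{H(a)}\cdot H\delta(e_a)$. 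Using naturality of $\delta$ to rewrite $H\delta\cdot H(e_a)$ and the entwining unit axiom $\lambda\cdot eG = Ge$ (i.e. $\lambda_a\cdot e_{G(a)}=G(e_a)$, read with $G=\overline{\bH}$), this reduces the right-hand composite to $\lambda_a\cdot H(e_a)$. Hence $\delta_a=\lambda_a\cdot H(e_a)$ for all $a$, i.e. $\delta=\lambda\cdot He$.

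Once $(\mathrm a)$ is established, $(\mathrm b)$ is immediate: the two functors $K$ and $K_e$ send $a$ to the objects $((H(a),m_a),\delta_a)$ and $((H(a),m_a),\lambda_a\cdot H(e_a))$ respectively, which now have the same second coordinate, so $K=K_e$; and both sit in the same commutative triangle over $\phi_{\underline{\bH}}$ (Lemma~\ref{L.1}(1) and Diagram~$(\ref{comparison})$), so the identification is as comparison functors, not merely on objects. The main obstacle I anticipate is bookkeeping in step $(\mathrm a)$: one must be careful to apply the correct entwining unit axiom (there are two, the one through $eG$ and the one through $Ge$) and to match the comonad $\overline{\bH}$'s comultiplication against the compatibility diagram $(\ref{e.3})$ rather than against the plain distributive-law axioms; drawing the pasting diagram of the naturality square for $\lambda$ at $e_a$ together with $(\ref{e.3})$ and the unit law $m_a\cdot H(e_a)=\mathrm{id}$ should make the cancellation transparent. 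Everything else is formal.
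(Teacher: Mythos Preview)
Your approach is the same as the paper's: precompose the compatibility square~(\ref{e.3}) with $He$, use the unit law $m\cdot He=1$ on the top, and simplify the bottom path to $\lambda\cdot He$; the identification $K=K_e$ then follows from~(\ref{e.2.2}). Two of the tools you name in step~(a) are misidentified, though. First, the rewriting of $H\delta\cdot He$ is not ``naturality of $\delta$'' but simply the hypothesis $\delta\cdot e=He\cdot e$ applied under $H$, giving $H\delta\cdot He=HHe\cdot He$. Second, the passage from $\lambda H\cdot HHe$ to $HHe\cdot\lambda$ is naturality of $\lambda$ at $e$ (which you do mention in your closing paragraph), \emph{not} the entwining unit axiom $\lambda\cdot eG=Ge$: the latter concerns $eH$ (insertion in the first slot) and gives no information about $HHe$ (insertion in the last slot). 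With those two corrections the chase goes through exactly as in the paper: $Hm\cdot\lambda H\cdot H\delta\cdot He = Hm\cdot\lambda H\cdot HHe\cdot He = Hm\cdot HHe\cdot\lambda\cdot He = \lambda\cdot He$.
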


\begin{proof} Assume that $\delta \cdot e =He \cdot e$ and that $\lambda$ is compatible.
 Then, in the diagram
$$
\xymatrix{H \ar[r]^-{He} \ar[d]_{He}& HH \ar[r]^m \ar[d]^{H\delta} & H \ar[r]^\delta &HH\\
HH \ar[rrd]_{\lambda}\ar[r]^{HHe} &HHH \ar[rr]^{\lambda H} & & HHH \ar[u]_{Hm}\\
&&HH \ar[ru]_{HHe}& ,}
$$
the rectangles commute. Since the triangle is also commutative by naturality of composition
and since $m\cdot He=1$, it follows that $\delta=\lambda\cdot He$.
From Section \ref{groupl} and (\ref{e.2.2}), we conclude that the comparison functor $K$  is induced by
the grouplike morphism $e$, that is  $K=K_e$.
\end{proof}

\begin{remark}\label{rem:m} \em Note that if $\ve\cdot m=\ve\cdot H\ve$
(i.e. $\ve:H \to 1$ is an
augmentation of $\underline{\bH}$) and $\lambda$ is compatible, then  postcomposing the
diagram (\ref{e.3}) with the morphism $H\ve$ implies $m=H\ve\cdot\lambda$.
\end{remark}

In the next propositions we do not require a priori $\lambda$ to be a
compatible entwining.

\begin{proposition} \label{conditions}
Let $(\underline{\bH},\overline{\bH},\lambda)$ be a monad-comonad triple.
\begin{rlist}
    \item  If $\,\,\delta=\lambda \cdot He$, then $\delta \cdot e=He \cdot e$;
    \item  if $\,\,m=H\varepsilon \cdot \lambda$, then
       $\varepsilon \cdot m = \varepsilon \cdot H\varepsilon$.
\end{rlist}
Moreover, if one of these conditions is satisfied,
then $\varepsilon \cdot e=1$, provided that $e: 1 \to H$ is a
(componentwise) monomorphism or $\ve$ is a (componentwise) epimorphism.
\end{proposition}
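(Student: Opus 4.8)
The plan is to prove the two implications (i) and (ii) by dualising each other, and then handle the final clause about $\varepsilon\cdot e=1$ using the entwining's compatibility with units and counits.

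For (i), assume $\delta=\lambda\cdot He$. Precomposing with $e$ gives $\delta\cdot e=\lambda\cdot He\cdot e$. Now I would use the unit compatibility diagram for the entwining $\lambda:HG\to GH$ from \ref{f.entw} (the triangle $G\xrightarrow{eG}TG\xrightarrow{\lambda}GT$ equal to $Ge$), which in the present setting $T=G=H$ reads $\lambda\cdot eH=He$. Applying this with the naturality square of $e:1\to H$ relating $eH\cdot e$ and $He\cdot e$ (both equal to the common diagonal $ee:1\to HH$), one gets $\lambda\cdot He\cdot e=\lambda\cdot eH\cdot e=He\cdot e$. Hence $\delta\cdot e=He\cdot e$, which is the grouplike condition. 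Step (ii) is formally dual: assuming $m=H\varepsilon\cdot\lambda$, postcompose with $\varepsilon$ to get $\varepsilon\cdot m=\varepsilon\cdot H\varepsilon\cdot\lambda$; then invoke the counit compatibility triangle of $\lambda$ ($\varepsilon T\cdot\lambda=T\varepsilon$, i.e. $\varepsilon H\cdot\lambda=H\varepsilon$ here) together with the naturality square of $\varepsilon:H\to 1$ to rewrite $\varepsilon\cdot H\varepsilon\cdot\lambda=\varepsilon\cdot\varepsilon H\cdot\lambda=\varepsilon\cdot H\varepsilon$, giving the augmentation identity $\varepsilon\cdot m=\varepsilon\cdot H\varepsilon$.

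For the final clause, suppose one of the two conditions holds; say $\delta=\lambda\cdot He$ (the other case being symmetric). I would compute the composite $\varepsilon\cdot e\cdot(\varepsilon\cdot e):1\to 1$ in two ways, or more directly exploit the counit axiom of the comonad and the unit axiom of the monad. Using $\varepsilon\cdot\delta=1$ (comonad counitality, one leg) we have $1=\varepsilon_{H}\cdot\delta\cdot e$... actually more cleanly: from $\delta=\lambda\cdot He$ and $m\cdot He=1$ combined with the mixed-distributive-law counit axiom $\varepsilon H\cdot\lambda=H\varepsilon$, one derives a relation of the form $(\varepsilon\cdot e)$ composed with something equal to an identity, so that $\varepsilon\cdot e$ is a split mono and a split epi; then the monomorphism (resp. epimorphism) hypothesis on $e$ (resp. $\varepsilon$) upgrades this to $\varepsilon\cdot e=1$. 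Concretely: $He\cdot(\varepsilon\cdot e)$ versus $e$, using naturality of $e$, shows $e\cdot\varepsilon\cdot e=He\cdot e\cdot\varepsilon\cdots$ — the key is that $e\cdot(\varepsilon\cdot e)=e$ will follow, and then cancelling the mono $e$ yields $\varepsilon\cdot e=1$.

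The main obstacle I anticipate is the last clause: the bookkeeping in identifying exactly which composite equals the identity, since $\varepsilon\cdot e$ lives at the identity functor $1\to 1$ and one must produce the equation $e\cdot(\varepsilon\cdot e)=e$ (or dually $(\varepsilon\cdot e)\cdot\varepsilon=\varepsilon$) from the available axioms — this requires carefully combining the entwining unit/counit triangles, the monad unitality $m\cdot He=m\cdot eH=1$, the comonad counitality $\varepsilon H\cdot\delta=H\varepsilon\cdot\delta=1$, and naturality of $e$ and $\varepsilon$, in just the right order. Once that single equation is in hand, the cancellation step using the mono/epi hypothesis is immediate. Parts (i) and (ii) themselves should be short diagram chases.
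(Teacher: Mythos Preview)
Your arguments for (i) and (ii) are exactly those of the paper: naturality of $e$ (resp.\ $\varepsilon$) combined with the entwining's unit (resp.\ counit) triangle gives the result in one line, just as you describe.

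For the final clause you have correctly identified the target equations $e\cdot(\varepsilon\cdot e)=e$ and $(\varepsilon\cdot e)\cdot\varepsilon=\varepsilon$, and you have listed the right ingredients, but you are missing the one concrete intermediate step that makes the bookkeeping immediate: one first shows $H\varepsilon\cdot He=1_H$. From $\delta=\lambda\cdot He$, the comonad counitality $\varepsilon H\cdot\delta=1$ gives
\[
1=\varepsilon H\cdot\lambda\cdot He=H\varepsilon\cdot He,
\]
using the entwining counit triangle $\varepsilon H\cdot\lambda=H\varepsilon$; the case $m=H\varepsilon\cdot\lambda$ yields the same identity via $1=m\cdot eH=H\varepsilon\cdot\lambda\cdot eH=H\varepsilon\cdot He$. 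Once $H(\varepsilon\cdot e)=1_H$ is in hand, naturality of $e$ (namely $He\cdot e=eH\cdot e$ and $H\varepsilon\cdot eH=e\cdot\varepsilon$) gives
\[
e=H\varepsilon\cdot He\cdot e=H\varepsilon\cdot eH\cdot e=e\cdot\varepsilon\cdot e,
\]
and cancelling the mono $e$ yields $\varepsilon\cdot e=1$; the epimorphism case is the dual computation $\varepsilon=\varepsilon\cdot H\varepsilon\cdot He=\varepsilon\cdot e\cdot\varepsilon$. Your remark that ``$\varepsilon\cdot e$ is a split mono and a split epi'' is not what one obtains and is a red herring: the identity one derives lives at the level of $H$, not of the identity functor, and it is the naturality-plus-cancellation step that descends it to $1$.
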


\begin{proof}(i) Assume $\delta=\lambda \cdot He$. Since
$He \cdot e=eH \cdot e$ (by naturality) and $\lambda \cdot eH=He$
(see \ref{f.entw}),
$$\delta\cdot e=\lambda \cdot He \cdot e=\lambda \cdot eH \cdot e=He
\cdot e.$$

(ii) Assume $m=H\varepsilon \cdot \lambda$. Since $\varepsilon
\cdot H\varepsilon =\varepsilon \cdot \varepsilon H$ and
$\varepsilon H \cdot \lambda=H \varepsilon$ (see \ref{f.entw}),
$$\varepsilon\cdot m=\varepsilon \cdot H\varepsilon \cdot \lambda=
\varepsilon \cdot H\varepsilon \cdot \lambda =\varepsilon \cdot H
\varepsilon.$$

To show the final claim, observe that   $\delta=\lambda \cdot He$ implies
 $$1=\varepsilon H \cdot \delta =\varepsilon H \cdot \lambda \cdot He
=H\varepsilon \cdot He ,$$
and  $\,\,m=H\varepsilon \cdot\lambda$ implies
 $$1=m \cdot eH= H\varepsilon \cdot \lambda \cdot eH =H\varepsilon \cdot He,$$
so in both cases, $1=H\varepsilon \cdot H e$.
Naturality of $e$ and $\ve$ imply commutativity of the diagrams, respectively,
$$\xymatrix{H \ar[r]^{eH}\ar[d]_{\varepsilon}& HH
\ar[d]^{H\varepsilon}\\
1 \ar[r]_e &H , }\quad
\xymatrix{H \ar[r]^{He}\ar[d]_{\varepsilon}& HH
\ar[d]^{\varepsilon H}\\
1 \ar[r]_e &H .}
$$
From the left hand diagram one gets
$$e=H\varepsilon \cdot He \cdot
e=H\varepsilon \cdot eH \cdot e=e \cdot \varepsilon \cdot e,$$
thus if $e$ is a (componentwise) monomorphism, $\varepsilon\cdot e=1$,
while the right hand diagram implies
$$\ve = \ve\cdot H\ve \cdot He = \ve \cdot e\cdot \ve $$
and hence $\varepsilon\cdot e=1$ provided $\ve$ is  a (componentwise) epimorphism.
\end{proof}

\begin{thm}\label{liv}{\bf Lemma.}
Let $(\underline{\bH},\overline{\bH},\lambda)$ be a monad-comonad triple.
  If
\begin{center}
 $m=H\varepsilon \cdot \lambda$ \; or \; $\delta=  \lambda \cdot He$,
\end{center}
then $\lambda$ is compatible, that is, diagram (\ref{e.3}) is commutative.
\end{thm}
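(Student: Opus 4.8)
The plan is to treat the two hypotheses symmetrically, establishing compatibility — that is, commutativity of the pentagon \eqref{e.3} — in each case by a diagram chase that uses only the entwining axioms of \ref{f.entw}, naturality, and the (co)unit laws. I will spell out the argument under the assumption $\delta = \lambda \cdot He$; the case $m = H\varepsilon\cdot\lambda$ is dual (pass to the opposite category, which interchanges monad and comonad, interchanges $m$ with $\delta$ and $e$ with $\varepsilon$, and reverses the entwining) so the same chase applies verbatim with arrows reversed.

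First I would substitute $\delta = \lambda\cdot He$ into both occurrences of $\delta$ appearing in \eqref{e.3}. The upper path $HH \xrightarrow{m} H \xrightarrow{\delta} HH$ becomes $HH \xrightarrow{m} H \xrightarrow{He} HH \xrightarrow{\lambda} HH$, and the lower path $HH \xrightarrow{H\delta} HHH \xrightarrow{\lambda H} HHH \xrightarrow{Hm} HH$ becomes $HH \xrightarrow{H(He)} HHH \xrightarrow{H\lambda} HHH \xrightarrow{\lambda H} HHH \xrightarrow{Hm} HH$. So the claim reduces to showing that
$$
\lambda \cdot He \cdot m \;=\; Hm \cdot \lambda H \cdot H\lambda \cdot HHe
$$
as natural transformations $HH \to HH$. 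I would then insert $HHe$ (equivalently $HeH$, by naturality of $e$) far enough to the left so that the first associativity axiom of the entwining — the square relating $mG$ to $Gm$ through $T\lambda$ and $\lambda T$ in \ref{f.entw}, here read with $T = G = H$ — can be applied to rewrite $\lambda \cdot (m G)$; concretely, $\lambda_a \cdot m_{G(a)} = G(m_a)\cdot\lambda_{T(a)}\cdot T(\lambda_a)$, which in our notation is $\lambda\cdot mH = Hm\cdot\lambda H\cdot H\lambda$ on $HHH$. Precomposing this identity with $HHe: HH \to HHH$ and using naturality of $m$ (so that $mH \cdot HHe = He \cdot m$, since $e$ is natural and $m\cdot He = \mathrm{id}$ gives the needed cancellation) matches the two sides.

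The step I expect to be the main obstacle is bookkeeping the whiskering: I must be careful about \emph{which} copy of $H$ the natural transformation $e: 1 \to H$ is inserted into, since $He$, $eH$ and the various $HHe$, $HeH$, $eHH$ differ, and naturality of $e$ (the square $He\cdot e = eH\cdot e$, and more generally $f \cdot e = e \cdot f$ read suitably) is exactly what lets one slide $e$ past $m$, past $H$, and into position. A clean way to organise this is to draw a single large diagram with the four outer paths being the two sides of \eqref{e.3} after substituting $\delta=\lambda\cdot He$, subdivide it into an entwining-axiom square, two naturality squares (for $e$ and for $m$), and one triangle using $m\cdot He=\mathrm{id}$, and check each cell commutes. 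Once the $\delta=\lambda\cdot He$ case is done, I would invoke the opposite-category duality in one sentence rather than re-running the chase, noting that Remark \ref{rem:m} already records the symmetric consequence $m = H\varepsilon\cdot\lambda$ of the dual pentagon, so the two hypotheses really are mirror images.
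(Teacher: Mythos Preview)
Your proposal is correct and follows essentially the same route as the paper: substitute $\delta=\lambda\cdot He$, invoke the entwining axiom $\lambda\cdot mH = Hm\cdot \lambda H\cdot H\lambda$ (the first square of \ref{f.entw} with $T=G=H$), and close the diagram using naturality of $m$ to get $mH\cdot HHe = He\cdot m$. The paper packages this as a single commutative diagram with a triangle (the hypothesis, whiskered by $H$) and a trapezium (the entwining axiom), then reads off the outer path; you unfold it equationally, but the content is identical.

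One small cleanup: the identity $mH\cdot HHe = He\cdot m$ is pure naturality of $m$ applied to the morphism $e_a:a\to H(a)$; the unit law $m\cdot He=\mathrm{id}$ plays no role here, nor does naturality of $e$. So your list of cells can be trimmed to just the hypothesis triangle, the entwining trapezium, and one naturality square for $m$ --- exactly what the paper uses. Your duality remark for the $m=H\varepsilon\cdot\lambda$ case is fine and matches the paper's ``in a similar way'' dismissal.
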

\begin{proof}
If $\delta=  \lambda \cdot He$, the triangle is commutative in the diagram
 $$\xymatrix{
 HH \ar[r]^{H\delta\quad} \ar[d]_{HHe} & HHH \ar[r]^{\lambda H} & HHH\ar[d]^{Hm} \\
 HHH \ar[ru]_{H\lambda} \ar[r]_{mH} & HH \ar[r]^\lambda & HH ,} $$
whereas the trapezium is commutative by the entwining property of $\lambda$.
The left path of the outer diagram is
$$ \lambda \cdot mH \cdot HHe = \lambda \cdot He \cdot m = \delta\cdot m.$$
This shows that (\ref{e.3}) is commutative.

In a similar way  the claim for $m=H\varepsilon \cdot \lambda$ is proved.
\end{proof}

To sum up, combining Proposition \ref{conditions}, Remark \ref{rem:m} and Lemma \ref{liv} yields

\begin{proposition} \label{bimonad}
Let $(\underline{\bH},\overline{\bH},\lambda)$ be a monad-comonad triple.

\begin{zlist}
\item  $\delta=\lambda \cdot He$ if and only if $\lambda$ is compatible and
 $\delta\cdot e= He\cdot e$;

\item $m=H\varepsilon \cdot \lambda$ if and only if $\lambda$ is compatible
    and $\ve\cdot m= \ve\cdot H\ve$;

\item if $\delta=\lambda \cdot He$, $m=H\varepsilon \cdot \lambda$, and
$e: 1 \to H$ is a (componentwise) monomorphism or $\varepsilon$ is a (componentwise)
epimorphism,
then  $(\underline{\bH},\overline{\bH},\lambda)$ is a bimonad (see {\rm \ref{def-comp}}).
\end{zlist}
\end{proposition}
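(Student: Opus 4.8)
The plan is to prove the three equivalences/implications in Proposition~\ref{bimonad} by simply assembling the results already established, since each clause is a repackaging of earlier material. For clause~(1): the forward implication $\delta=\lambda\cdot He \Rightarrow (\lambda\text{ compatible}) \wedge (\delta\cdot e=He\cdot e)$ follows by combining Lemma~\ref{liv} (which gives compatibility from $\delta=\lambda\cdot He$) with Proposition~\ref{conditions}(i) (which gives $\delta\cdot e=He\cdot e$ from the same hypothesis). For the converse, I would invoke Proposition~\ref{Hopf.1}: if $\lambda$ is compatible and $e:1\to H$ is a grouplike morphism of $\overline{\bH}$ — which by \ref{def-comp} is exactly the condition $\delta\cdot e=He\cdot e$ — then the proof of that proposition already shows $\delta=\lambda\cdot He$. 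So clause~(1) is immediate.

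Clause~(2) is the formal dual of clause~(1). The forward implication $m=H\varepsilon\cdot\lambda \Rightarrow (\lambda\text{ compatible})\wedge(\ve\cdot m=\ve\cdot H\ve)$ follows from Lemma~\ref{liv} (compatibility) together with Proposition~\ref{conditions}(ii) ($\ve\cdot m=\ve\cdot H\ve$). For the converse, Remark~\ref{rem:m} states precisely that if $\ve\cdot m=\ve\cdot H\ve$ and $\lambda$ is compatible then $m=H\varepsilon\cdot\lambda$. So clause~(2) is also immediate, modulo noting that the proofs of Lemma~\ref{liv} and Proposition~\ref{conditions} really were carried out on both sides (the ``in a similar way'' at the end of Lemma~\ref{liv}'s proof, and the explicit part~(ii) of Proposition~\ref{conditions}).

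For clause~(3): assume $\delta=\lambda\cdot He$, $m=H\varepsilon\cdot\lambda$, and that $e$ is a componentwise monomorphism or $\ve$ is a componentwise epimorphism. By clause~(1), $\lambda$ is compatible and $\delta\cdot e=He\cdot e$, i.e.\ Diagram~(\ref{D.1.18b})(ii) commutes. By clause~(2), $\ve\cdot m=\ve\cdot H\ve$, i.e.\ Diagram~(\ref{D.1.18b})(i) commutes. Finally, the last assertion of Proposition~\ref{conditions} gives $\ve\cdot e=1$ under exactly the stated mono/epi hypothesis, which is Diagram~(\ref{D.1.18b})(iii). Since compatibility plus the three diagrams of~(\ref{D.1.18b}) is by definition (\ref{def-comp}) the statement that $(\underline{\bH},\overline{\bH},\lambda)$ is a bimonad, we are done.

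There is no real obstacle here — the proposition is explicitly billed as a summary (``combining Proposition~\ref{conditions}, Remark~\ref{rem:m} and Lemma~\ref{liv}''), so the only care needed is bookkeeping: making sure each half of each biconditional is matched to the correct earlier statement, and in particular that the ``grouplike morphism of $\overline{\bH}$'' language in Proposition~\ref{Hopf.1} and in \ref{def-comp} is recognized as the identity $\delta\cdot e=He\cdot e$, and dually that ``augmentation of $\underline{\bH}$'' means $\ve\cdot m=\ve\cdot H\ve$. I would write the proof as three short paragraphs, one per clause, each just citing the relevant prior results; no diagram chases are needed beyond those already in the excerpt.
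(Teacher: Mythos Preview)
Your proposal is correct and matches the paper's approach: the proposition is indeed stated there as a summary obtained by ``combining Proposition~\ref{conditions}, Remark~\ref{rem:m} and Lemma~\ref{liv}'', and your clause-by-clause bookkeeping is exactly the intended assembly. You are in fact slightly more careful than the paper's one-line preamble, since you also correctly identify Proposition~\ref{Hopf.1} as the source of the converse implication in clause~(1).
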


\medskip

If $(\underline{\bH},\overline{\bH},\lambda)$ is a monad-comonad triple such that
$\delta=  \lambda \cdot He$, then
$(\underline{\bH},\overline{\bH},\lambda)$ is a compatible monad-comonad triple by Lemma \ref{liv},
and hence, by Proposition \ref{Hopf.1}, the assignment $a \longmapsto (H(a), m_a, \delta_a)$ yields
the functor $K_e:\A \to \A^{\overline{\bH}}_{\underline{\bH}}(\lambda)$
with commutative diagram
$$\xymatrix{
\A \ar[r]^-{K_e} \ar[dr]_{\phi_{\underline{\bH}}}& \A^{\overline{\bH}}_{\underline{\bH}}(\lambda)=(\A_{\underline{\bH}})^{\widehat{\overline{\bH}}}
\ar[d]^{U^{\widehat{\overline{\bH}}}}\\
&\A_{\underline{\bH}} \,.}$$

Recall from \cite{MW-Bim} that a bimonad $\bH$ is said to be a \emph{Hopf monad} provided it has an \emph{antipode}, i.e.
there exists a natural transformation $S: H \to H$ such that $m\cdot HS \cdot \delta=e \cdot \ve=m\cdot SH \cdot \delta.$

\begin{theorem} \label{identity}
Let $(\underline{\bH},\overline{\bH},\lambda)$ be a monad-comonad triple on a
Cauchy complete category $\A$. Assume that $\delta=  \lambda \cdot He$ and $e: 1 \to H$
is a (componentwise) monomorphism. Then the following are equivalent:
\begin{blist}
\item $K_e:\A \to \A^{\overline{\bH}}_{\underline{\bH}}(\lambda)$ is an equivalence of
 categories;
\item the composite $H(a) \xrightarrow{\delta_a}  HH(a) \xrightarrow{H(h)} H(a)$
is an isomorphism for every $(a,h) \in \A_{\underline{\bH}}$;
\item the composite $HH \xrightarrow{\delta H}  HHH \xrightarrow{Hm} HH$ is an isomorphism.
\end{blist}
If, in addition, $\ve: H \to 1$ is an augmentation of the monad $\underline{\bH}$,
then $\bH$ is
a Hopf monad.
\end{theorem}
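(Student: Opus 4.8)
The strategy is to reduce each of the three equivalences to results already available in the excerpt, and then to extract the antipode. First I would observe that the hypothesis $\delta = \lambda\cdot He$ forces, via Lemma~\ref{liv}, that $\lambda$ is compatible, so that $\A^{\overline{\bH}}_{\underline{\bH}}(\lambda)=(\A_{\underline{\bH}})^{\widehat{\overline{\bH}}}$ makes sense; and via Proposition~\ref{conditions}(i) that $\delta\cdot e = He\cdot e$, i.e. $e\colon 1\to H$ is a grouplike morphism of $\overline{\bH}$. By Proposition~\ref{Hopf.1} the comparison functor $K$ is exactly $K_e$, the functor induced by the grouplike morphism $e$. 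So Theorem~\ref{fun.th.1} applies verbatim: $K_e$ is an equivalence iff (i) $\phi_{\underline{\bH}}$ is comonadic and (ii) the composite $H(a)\xrightarrow{H(e_a)}HH(a)\xrightarrow{\lambda_a}HH(a)\xrightarrow{H(h)}H(a)$ is an isomorphism for every $(a,h)$. Since $\lambda_a\cdot H(e_a) = \delta_a$ by hypothesis, condition (ii) reads precisely as statement~(b). This gives (a)$\Leftrightarrow$(b) \emph{provided} one also knows that (b) forces $\phi_{\underline{\bH}}$ to be comonadic; I would handle that by invoking Beck's (co)monadicity theorem together with the Cauchy completeness of $\A$ — the point being that condition (b) lets one produce the requisite equalisers as split equalisers (absolute coequalisers dualised), so $\phi_{\underline{\bH}}$ becomes comonadic automatically. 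The equivalence (b)$\Leftrightarrow$(c) is then Remark~\ref{rem.1}: the composite in (c) is $Hm\cdot\delta H$, which equals $Hm\cdot(\lambda H)\cdot(HHe) $ composed appropriately — more directly, Remark~\ref{rem.1} states that condition (ii) of Theorem~\ref{fun.th.1} is equivalent to the composite $HH(a)\xrightarrow{T(g_{T(a)})}\cdots\xrightarrow{G(m_a)}GT(a)$ being an isomorphism for all $a$, which under $\lambda\cdot He=\delta$ becomes $Hm\cdot\delta H$ being iso, i.e. exactly (c).

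Next I would address the final clause: assuming in addition that $\ve\colon H\to 1$ is an augmentation of $\underline{\bH}$, I claim $\bH$ is a Hopf monad. The first task is to check that $(\underline{\bH},\overline{\bH},\lambda)$ is genuinely a bimonad. We already have compatibility of $\lambda$, that $e$ is grouplike for $\overline{\bH}$ (Diagram~\eqref{D.1.18b}(ii)), and that $\ve$ is an augmentation of $\underline{\bH}$ (Diagram~\eqref{D.1.18b}(i)); the remaining counit-unit axiom \eqref{D.1.18b}(iii), namely $\ve\cdot e = 1$, follows from Proposition~\ref{conditions}: its hypothesis $\delta=\lambda\cdot He$ is in force and $e$ is a componentwise monomorphism, so $\ve\cdot e=1$. (Alternatively Proposition~\ref{bimonad}(3) packages this directly.) Hence $\bH$ is a bimonad.

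To produce the antipode, I would work in the equivalence $K_e\colon\A\xrightarrow{\sim}\A^{\overline{\bH}}_{\underline{\bH}}(\lambda)$ and use its right adjoint $\overline{R}$, described by the equaliser \eqref{eq.eq}: for $((a,h),\theta)$ the object $\overline{R}((a,h),\theta)$ equalises $\theta$ and $\beta_{(a,h)}=(g_e)_a = e_a$-type data; concretely for the free-cofree object $H(a)$ with its canonical structure, $\overline{R}$ computes an equaliser of $\delta_a$ against $H(e_a)\cdot(\text{unit})$, and since $K_e$ is an equivalence this equaliser returns $a$ itself. The standard move (as in the classical Hopf-monadicity arguments of \cite{MW-Bim}) is: the natural transformation whose $a$-component is the composite giving condition (b)/(c) is invertible, and one reads off the antipode $S$ as (a twist of) the composite of this inverse with $\delta$, $m$, $\ve$ and $e$. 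Precisely, letting $\gamma_a\colon H(a)\xrightarrow{\delta_a}HH(a)\xrightarrow{H(m_a\cdot\text{?})}\cdots$ be the iso from (b), one sets $S := (\text{structure maps})\circ\gamma^{-1}\circ(\text{structure maps})$ and verifies $m\cdot HS\cdot\delta = e\cdot\ve = m\cdot SH\cdot\delta$ by a diagram chase that uses the entwining equations \ref{f.entw}, the bimonad axioms \eqref{D.1.18b}, and the identity $\delta=\lambda\cdot He$ to commute everything.

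\textbf{Main obstacle.} The routine parts are the reductions to Theorem~\ref{fun.th.1}/Remark~\ref{rem.1} and the verification of the bimonad axioms. The genuine work is two-fold: (1) showing (b) $\Rightarrow$ comonadicity of $\phi_{\underline{\bH}}$, which requires producing enough (split/contractible) equalisers from the invertibility hypothesis plus Cauchy completeness of $\A$ — this is where one must be careful, since equivalence of categories is being deduced, not assumed; and (2) pinning down the explicit formula for the antipode $S$ and checking both antipode equations. I expect (2) to be the real bottleneck: it is a precise diagram chase in which one must correctly insert the entwining $\lambda$ at each step, and the two equations $m\cdot HS\cdot\delta=e\ve$ and $m\cdot SH\cdot\delta=e\ve$ typically require slightly different arguments (one using that $e$ is grouplike, the other that $\ve$ is an augmentation), both leaning on compatibility~\eqref{e.3}.
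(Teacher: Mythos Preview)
Your reductions to Theorem~\ref{fun.th.1} and Remark~\ref{rem.1} are correct and match the paper. The two places where you diverge deserve comment.

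\medskip

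\textbf{Comonadicity of $\phi_{\underline{\bH}}$.} You propose to derive comonadicity of $\phi_{\underline{\bH}}$ from condition~(b), claiming that ``condition~(b) lets one produce the requisite equalisers as split equalisers.'' This is the weak point: Beck's comonadicity criterion concerns whether $\A$ has, and $\phi_{\underline{\bH}}$ preserves, equalisers of certain $U_{\underline{\bH}}$-cosplit pairs; it is not clear how the invertibility of $H(h)\cdot\delta_a$ manufactures such splittings, and your sketch does not make this connection. In fact the paper does not use~(b) here at all. Comonadicity of $\phi_{\underline{\bH}}$ holds \emph{unconditionally} under the standing hypotheses: since $\delta=\lambda\cdot He$ and $e$ is a componentwise monomorphism, Proposition~\ref{conditions} gives $\ve\cdot e=1$ (you already observe this later for the bimonad axioms), and then Cauchy completeness together with \cite[Corollary~3.17]{Me} yields comonadicity directly. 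So the logical flow is simpler than you set it up: (a)$\Rightarrow$(b) by Theorem~\ref{fun.th.1}; (b)$\Leftrightarrow$(c) by Remark~\ref{rem.1}; and (c)$\Rightarrow$(a) because comonadicity is already in hand and only condition~(ii) of Theorem~\ref{fun.th.1} remains, which is~(c) (equivalently~(b)).

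\medskip

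\textbf{The Hopf monad clause.} Your verification that $\bH$ is a bimonad is fine and agrees with the paper (Proposition~\ref{bimonad} packages it). For the antipode, however, you propose an explicit construction via the right adjoint $\overline{R}$ and a diagram chase, and you correctly flag this as the ``real bottleneck.'' The paper sidesteps this entirely: once $\bH$ is a bimonad and the composite $Hm\cdot\delta H$ of~(c) is an isomorphism, \cite[3.1]{MW-Op} already records that $\bH$ is a Hopf monad. Your route could in principle be made to work, but the formula you sketch for $S$ is schematic (``$(\text{structure maps})\circ\gamma^{-1}\circ(\text{structure maps})$'') and the two antipode equations would each require a careful argument; none of that labour is needed here.
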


\begin{proof} Since $\delta=  \lambda \cdot He$, (a) $\Rightarrow$ (b) is trivial by Theorem
\ref{fun.th.1}, while (b) and (c) are equivalent by Remark \ref{rem.1}.

Given (c), it follows from Theorem \ref{fun.th.1} that
$K$ is an equivalence of categories if and only if the functor $\phi_{\underline{\bH}}$ is comonadic.
But by \cite [Corollary 3.17]{Me} this is always the case,
since $e: 1 \to H$ is a monomorphism and hence $\varepsilon \cdot e=1$ by Proposition \ref{conditions}.
This proves the implication (c)$\Rightarrow$(a).

Finally, if $\ve: H \to 1$ is an augmentation of the monad $\underline{\bH}$, then $\ve \cdot m=\ve \cdot H \ve$,
and since $(\underline{\bH},\overline{\bH},\lambda)$ is compatible, $m=H\ve \cdot \lambda$ by Proposition \ref{bimonad}.
Since $\delta=  \lambda \cdot He$, $\delta \cdot e=He \cdot e$ again by Proposition \ref{bimonad}. Thus $\bH$ is
a bimonad and it now follows from \cite[3.1]{MW-Op} that $\bH$ is a Hopf monad.
\end{proof}

\section{Generalised bialgebras} \label{Loday}

In this section, we apply our results in the context of operads to recover
results of Loday on generalised bialgebras in  \cite{Lod}. The Leitmotiv of the section is that a
(co)operad is a particular type of (co)monad. Let $\kfield$ denote a field and
$\A$ the category of $\kfield$-vector spaces.

\begin{thm} {\bf Schur functors.}\label{schurfunctors}{\em \
An {\it $\Sy$-module} $\cM$ in $\A$ (or {\it vector species}) is a  collection of objects $\cM(n)$, for $n>0$,
together with an action of the symmetric group $S_n$. To an $\Sy$-module $\cM$ one associates the functor
$$\begin{array}{cccc}
F_\cM:&\A&\lra & \A\\
&V&\mapsto& \bigoplus\limits_{n>0}\cM(n)\otimes_{\kfield[S_n]} V^{\otimes n}\end{array}.$$
Such a functor is called a {\it Schur functor}. Joyal proved in \cite{Joy} that for
two $\Sy$-modules
$\cM$ and $\cN$, the composite $F_\cM\cdot F_\cN$ is  a Schur functor of the form
$F_{\cM\circ \cN}$ with $\cM\circ \cN$ being the $\Sy$-module defined by
$$(\cM\circ \cN)(n)= \bigoplus\limits_{k>0,i_1+\ldots+i_k=n}\cM(k)\otimes_{\kfield[S_k]}\hbox{\rm Ind}_{S_{i_1}\times\ldots\times S_{i_k}}^{S_n}
\cN(i_1)\otimes\ldots\otimes \cN(i_k).$$
The product $\circ$ is called the \emph{plethysm} of $\Sy$-modules, and the category of $\Sy$-modules, together with the plethysm is a monoidal category. The unit for the plethysm is the $\Sy$-module
$$1(n)=\begin{cases} \kfield, & \text{ if } n=1, \\ 0, & \text{ else. }\end{cases}$$

For our purpose, we will always assume that any $\Sy$-module $\cM$ satisfies
$\cM(1)=\kfield$.

We denote by $e_\cM:1\to F_\cM$ the natural transformation which maps $V$  %into
to the summand $V$ of $F_\cM(V)$
and by $\ve_\cM:F_\cM\to 1$ the projection of $F_\cM(V)$  %onto
to the summand $V$. Then $\ve_\cM \cdot e_\cM=1$.
%Given an element  $x\in \cM(n)$, we denote by $\pi_x$ the natural transformation $F_\cM\lra (V\mapsto V^{\otimes n})$
%which sends $F_\cM(V)$ onto the summand $V^{\otimes n}\simeq \kfield[x]\otimes V^{\otimes n}$ of $F_\cM(V)$.
%One has $\ve=\pi_{1_\kfield}$.
}
\end{thm}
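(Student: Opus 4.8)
The plan is simply to unwind the definition of $F_\cM$ at $n=1$. Since $S_1$ is trivial we have $\kfield[S_1]=\kfield$, and the standing assumption $\cM(1)=\kfield$ gives
$$\cM(1)\otimes_{\kfield[S_1]}V^{\otimes 1}=\kfield\otimes_\kfield V,$$
which is canonically, and naturally in $V$, isomorphic to $V$. Thus $F_\cM(V)$ carries a natural direct sum decomposition
$$F_\cM(V)=V\oplus\Big(\bigoplus_{n>1}\cM(n)\otimes_{\kfield[S_n]}V^{\otimes n}\Big),$$
in which $(e_\cM)_V\colon V\to F_\cM(V)$ is the structural injection of the first summand and $(\ve_\cM)_V\colon F_\cM(V)\to V$ the structural projection onto it.

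First I would record that $e_\cM$ and $\ve_\cM$, defined componentwise in this way, are indeed natural transformations $1\to F_\cM$ and $F_\cM\to 1$ of endofunctors of $\A$: this is precisely the naturality in $V$ of the isomorphism $\kfield\otimes_\kfield V\cong V$, together with the functoriality of each summand $\cM(n)\otimes_{\kfield[S_n]}(-)^{\otimes n}$. Then the equality $\ve_\cM\cdot e_\cM=1$ reduces, one object $V$ at a time, to the elementary fact that for a direct sum of vector spaces the composite of the injection of a summand with the projection onto that same summand is the identity; hence $(\ve_\cM)_V\cdot(e_\cM)_V=\mathrm{id}_V$ for all $V$, i.e. $\ve_\cM\cdot e_\cM=1$.

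There is essentially no obstacle here; the only point deserving a word of care is to make the identification $\cM(1)\otimes_{\kfield[S_1]}V\cong V$ explicit and to check that it is natural in $V$, so that $e_\cM$ and $\ve_\cM$ are genuinely morphisms of functors and the claimed identity holds as an identity of natural transformations rather than merely of linear maps.
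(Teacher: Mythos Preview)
Your argument is correct: the only nontrivial assertion in this block is $\ve_\cM\cdot e_\cM=1$, and you verify it exactly as one should, by identifying the $n=1$ summand of $F_\cM(V)$ with $V$ and using that projection following inclusion on a direct summand is the identity. The paper itself gives no proof here---the block is a definition and the identity $\ve_\cM\cdot e_\cM=1$ is simply stated as evident---so your write-up is, if anything, more explicit than the paper.
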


\begin{thm} {\bf Operads, cooperads.}{\em \
An {\it operad} $\cA$ in $\A$ is a monoid in the monoidal category of $\Sy$-modules.
% together with equivariant composition maps
%$$m:\cA(n)\otimes_{\kfield[S_n]}\cA(i_1)\otimes\ldots\otimes\cA(i_n)\rightarrow \cA(i_1+\ldots+i_n)$$
%subject to associativity and unit relations.
This amounts to say that the functor $F_{\cA}$
is the functor part of a monad $\bT_{\cA}=(F_{\cA},m_\cA,e_\cA)$.

An {\em algebra  over an operad $\cA$}, or {\em $\cA$-algebra}, is a $\bT_{\cA}$-module.
 Hence, the free
$\cA$-algebra generated by a vector space $V$ is nothing else than
$(T_{\cA}(V),{(m_\cA)}_V, (e_\cA)_V)$.

A {\em cooperad $\cC$} in $\A$ is a comonoid in the monoidal category  of $\Sy$-modules.
This amounts to say that the functor  $F_{\cC}$ is the functor part of a comonad
$\bG_{\cC}=(F_{\cC},\delta_\cC,\ve_\cC).$

A {\em coalgebra over a cooperad $\cC$}, or {\em $\cC$-coalgebra},
is a $\bG_{\cC}$-comodule.

Note that one has to be a little careful with the definition of cooperads if one wants a
linear duality between operads and cooperads (see \cite{LV}).
With our definition and assumptions, any  coalgebra over a cooperad $\cC$  is naturally conilpotent.

We assume that, for any $\Sy$-module $\cM$, the $\kfield$-vector
space  $\cM(n)$ is finite dimensional. We assume also that either the action of the symmetric group is free
or the field $\kfield$ has characteristic $0$.}
\end{thm}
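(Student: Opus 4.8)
The assertion to prove is that every coalgebra over a cooperad $\cC$ --- equivalently, every $\bG_\cC$-comodule $(V,\theta)$ --- is conilpotent: the canonical increasing ``primitive'' filtration of $V$ defined by the iterated reduced coproduct exhausts $V$, and it is visibly functorial in $(V,\theta)$. The plan is to reduce this to an elementary arity-counting estimate, which is available precisely because, with our conventions, the Schur functor $F_\cC$ is a \emph{direct sum} over the arity $n$ and not a product: each $\theta(v)$ then involves only finitely many arities, and coassociativity propagates this bound through all iterates.

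First I would set up notation. Since $\cC(1)=\kfield$, split the $\Sy$-module $\cC$ as $\cC=1\oplus\overline{\cC}$ with $\overline{\cC}(n)=\cC(n)$ for $n\geq2$ and $\overline{\cC}(1)=0$; the counit law $\ve_\cC\cdot\theta=1$ forces the arity-$1$ component of $\theta$ to be $\mathrm{id}_V$, and I write $\overline{\theta}:V\to\bigoplus_{n\geq2}\cC(n)\otimes_{\kfield[S_n]}V^{\otimes n}$ for its reduced part. Let $\delta^{(r)}_\cC:\cC\to\cC^{\circ r}$ be the iterated comultiplication of the comonad $\bG_\cC$; coassociativity of the comodule $(V,\theta)$ gives $\theta^{(r)}=(\delta^{(r)}_\cC)_V\circ\theta:V\to F_{\cC^{\circ r}}(V)$ for the $r$-fold structure map. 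As plethysm is additive in its first argument and $1$ is its unit, $\cC^{\circ r}$ splits as $\overline{\cC}^{\circ r}\oplus(\text{terms having at least one of the }r\text{ plethystic levels of arity }1)$; write $\overline{\theta}^{\,r}$ for the component of $\theta^{(r)}$ along the summand $F_{\overline{\cC}^{\circ r}}(V)$, and put $F_rV:=\{v\in V:\overline{\theta}^{\,r}(v)=0\}$. One verifies that $\{F_rV\}_r$ is increasing and coincides with the primitive filtration, so the goal becomes $V=\bigcup_{r\geq1}F_rV$.

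Next comes the estimate: $\overline{\cC}^{\circ r}$ is concentrated in arities $\geq2^r$. This holds for $r=1$, and the arity-$n$ part of $\overline{\cC}^{\circ r}=\overline{\cC}\circ\overline{\cC}^{\circ(r-1)}$ is a sum of terms built from some $\overline{\cC}(k)$ with $k\geq2$ and factors $\overline{\cC}^{\circ(r-1)}(i_1),\ldots,\overline{\cC}^{\circ(r-1)}(i_k)$ with $i_1+\ldots+i_k=n$, so $n\geq2\cdot2^{r-1}=2^r$ by induction (equivalently: a rooted tree all of whose leaves lie at depth $r$ and all of whose internal vertices have arity $\geq2$ has at least $2^r$ leaves). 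Now fix $v\in V$. Since $F_\cC(V)=\bigoplus_{n>0}\cC(n)\otimes_{\kfield[S_n]}V^{\otimes n}$ is a direct sum, $\theta(v)$ lies in $\bigoplus_{n=1}^{N}\cC(n)\otimes_{\kfield[S_n]}V^{\otimes n}$ for some finite $N=N(v)$; and since $\delta^{(r)}_\cC$ is a morphism of $\Sy$-modules it preserves arity, so $\theta^{(r)}(v)=(\delta^{(r)}_\cC)_V(\theta(v))$ --- hence also its direct summand $\overline{\theta}^{\,r}(v)$ --- still has arity $\leq N$. By the estimate, $\overline{\theta}^{\,r}(v)\in F_{\overline{\cC}^{\circ r}}(V)$ vanishes once $2^r>N$, so $v$ lies in a finite stage of the primitive filtration; letting $v$ vary gives $V=\bigcup_rF_rV$. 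Naturality in coalgebra morphisms is immediate, as $\overline{\theta}$ and its iterates are built functorially from $\theta$.

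The one point that needs care is the bookkeeping in the second paragraph: one must check that the component of $\theta^{(r)}=(\delta^{(r)}_\cC)_V\circ\theta$ along $F_{\overline{\cC}^{\circ r}}(V)$ is genuinely the $r$-fold iterated reduced coproduct, and that the filtration it defines is the primitive one --- this amounts to coassociativity of the $\bG_\cC$-comodule together with the plethystic splitting of $\cC^{\circ r}$ and the arity-preservation of $\delta_\cC$. I would also remark that neither the finite-dimensionality of the $\cC(n)$ nor the freeness/characteristic-zero hypothesis is used here: conilpotency is forced purely by the normalization $\cC(1)=\kfield$ together with the direct-sum (rather than direct-product) convention for Schur functors.
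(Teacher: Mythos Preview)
The paper does not actually prove this assertion: the block titled ``Operads, cooperads'' is definitional and expository, and the sentence ``With our definition and assumptions, any coalgebra over a cooperad $\cC$ is naturally conilpotent'' is stated without argument, as a standard consequence of the standing conventions (namely $\cC(1)=\kfield$ and the direct-sum definition of Schur functors). So there is no proof in the paper to compare against; you have supplied what the authors left implicit.

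Your argument is correct and is the standard one. The three ingredients --- that $\theta(v)$ has bounded arity because $F_\cC(V)$ is a \emph{direct sum}, that the iterated comultiplication $\delta_\cC^{(r)}$ preserves total arity, and that $\overline{\cC}^{\circ r}$ is concentrated in arities $\geq 2^r$ --- are all valid, and the conclusion follows as you say. The bookkeeping you flag (that the projection of $\theta^{(r)}$ onto the summand $F_{\overline{\cC}^{\circ r}}(V)$ really is the $r$-fold iterated reduced coaction) is routine: the splitting $\cC=1\oplus\overline{\cC}$ gives a retraction $\pi:\cC\to\overline{\cC}$, hence retractions $\pi^{\circ r}:\cC^{\circ r}\to\overline{\cC}^{\circ r}$, and applying $F_{\pi^{\circ r}}$ to $\theta^{(r)}$ yields exactly the iterated $\overline{\theta}$. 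Your closing remark is also correct and worth making: the finite-dimensionality and freeness/characteristic-zero hypotheses listed in this block are there for other reasons (duality with operads, well-behaved coinvariants) and play no role in conilpotency.
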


\begin{proposition}\label{op:compatible} If $\cA$ is an operad, then $\varepsilon_{\cA}$ is
an augmentation for the monad $\bT_{\cA}$.
If $\cC$ is a cooperad then $e_\cC$ is a grouplike morphism for the comonad $\bG_{\cC}$.
\end{proposition}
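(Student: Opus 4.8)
The plan is to deduce both assertions from a single structural fact: the Schur-functor assignment $\cM\mapsto F_\cM$ underlies a strong monoidal functor $F$ from the category of $\Sy$-modules (with plethysm $\circ$ and unit the $\Sy$-module $1$) to the category of endofunctors of $\A$ (with composition and unit $\mathrm{Id}$). Joyal's theorem recalled in \ref{schurfunctors} supplies the coherence isomorphisms $F_{\cM\circ\cN}\cong F_\cM F_\cN$ and $F_1=\mathrm{Id}$, and $F$ is plainly functorial and faithful on morphisms of $\Sy$-modules, sending $f\colon\cM\to\cN$ to the natural transformation $V\mapsto\bigoplus_{n>0} f(n)\otimes\mathrm{id}_{V^{\otimes n}}$. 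A strong monoidal functor carries monoids to monoids and comonoids to comonoids, together with their morphisms; this is precisely the passage from an operad $\cA$ to the monad $\bT_\cA$ (with $m_\cA$ the image of the operadic composition $\gamma\colon\cA\circ\cA\to\cA$) and from a cooperad $\cC$ to the comonad $\bG_\cC$ (with $\delta_\cC$ the image of the decomposition $\Delta\colon\cC\to\cC\circ\cC$). Hence it suffices to exhibit, on the level of $\Sy$-modules, a monoid morphism $\cA\to 1$ whose image under $F$ is $\ve_\cA$, and a comonoid morphism $1\to\cC$ whose image under $F$ is $e_\cC$.

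First I would pin down these $\Sy$-module maps. Since $1(n)=0$ for $n\neq 1$ and $\cM(1)=\kfield$ by our standing assumption, there is a unique $\Sy$-module morphism $p_\cM\colon\cM\to 1$ (the identity in arity $1$) and a unique $\Sy$-module morphism $j_\cM\colon 1\to\cM$ (the identity in arity $1$). Directly from the definitions in \ref{schurfunctors} one reads off $F_{p_\cM}=\ve_\cM$ and $F_{j_\cM}=e_\cM$, and $p_\cM\cdot j_\cM=\mathrm{id}_1$, so that applying $F$ recovers the relation $\ve_\cM\cdot e_\cM=1$. For an operad, $j_\cA$ is the monoid unit, and the content still to be proved is that $p_\cA$ is a morphism of monoids; dually, for a cooperad $p_\cC$ is the comonoid counit, and the content is that $j_\cC$ is a morphism of comonoids.

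For the operad I would check that $p_\cA\colon\cA\to 1$ is a morphism of monoids. Unitality is $p_\cA\cdot j_\cA=\mathrm{id}_1$, already noted. For multiplicativity one must show $p_\cA\circ\gamma=m_1\cdot(p_\cA\circ p_\cA)$ as maps $\cA\circ\cA\to 1$, where $m_1\colon 1\circ 1\to 1$ is the unitor. In arities $\geq 2$ both sides land in $1(n)=0$. In arity $1$ the only term of the plethysm with all inner arities $\geq 1$ and total arity $1$ gives $(\cA\circ\cA)(1)=\cA(1)\otimes_{\kfield}\cA(1)\cong\kfield$, and both composites reduce there to $\gamma(1)\colon\kfield\otimes\kfield\to\kfield$ on the one hand and to scalar multiplication $\kfield\otimes\kfield\to\kfield$ on the other; these agree because $\gamma$ makes $\cA(1)=\kfield$ a unital associative $\kfield$-algebra with unit $j_\cA(1)(1)=1$, and by bilinearity the only such product on the line $\kfield$ is scalar multiplication. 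Applying $F$ (and Joyal's identification $F_\cA F_\cA=F_{\cA\circ\cA}$) then yields that $\ve_\cA=F_{p_\cA}\colon\bT_\cA\to\mathrm{Id}$ is a monad morphism, i.e.\ an augmentation of $\bT_\cA$. (Equivalently, one can verify the defining square of a monad morphism directly on $F_\cA(V)$, noting that in both composites only the length-one composition of arity-one operations survives.)

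For the cooperad the argument is formally dual: one checks that $j_\cC\colon 1\to\cC$ is a morphism of comonoids — counitality is $p_\cC\cdot j_\cC=\mathrm{id}_1$, and comultiplicativity $\Delta\cdot j_\cC=(j_\cC\circ j_\cC)\cdot\Delta_1$ is the same one-line arity-$1$ check, using that $\cC(1)=\kfield$ carries its unique counital coassociative coalgebra structure (comultiplication $1\mapsto 1\otimes 1$) — and then $F$ turns this into the statement that $e_\cC=F_{j_\cC}\colon\mathrm{Id}\to\bG_\cC$ is a comonad morphism from the identity comonad to $\bG_\cC$, i.e.\ a grouplike morphism of $\bG_\cC$ in the sense of Section \ref{groupl}. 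I do not expect a genuine obstacle: everything is formal once the strong monoidality of $F$ is in hand, and the only point needing a little care is the bookkeeping of the arity-$1$ components of $\gamma$, of $\Delta$, and of the plethysm product.
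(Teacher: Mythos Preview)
Your proof is correct and takes essentially the same approach as the paper: both reduce to an arity-by-arity check where arities $n>1$ are trivial (the target is $1(n)=0$) and arity $1$ is immediate from the standing assumption $\cM(1)=\kfield$. Your strong-monoidal-functor framing is a clean way to justify working at the level of $\Sy$-modules, but the paper simply verifies the augmentation square directly on Schur functors with the identical arity decomposition.
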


\begin{proof}  The unit for the plethysm
forms a (co)operad and the associated (co)monad is the identity functor.
Let $m:\cA\circ\cA\rightarrow \cA$ denote the operad composition.
One has to prove that, for every $n\geq 1$, the following diagram is commutative:
$$ \xymatrix{(\cA\circ\cA)(n)\ar[r]^>>>>>{\cA\circ\varepsilon_\cA} \ar[d]_m &
\cA(n) \ar[d]^{\varepsilon_{\cA}} \\
 \cA(n) \ar[r]^{\varepsilon_{\cA}} & 1(n) . }
 $$
If $n>1$, then the diagram commutes because the top and bottom compositions vanish.
If $n=1$, since $\cA(1)=\kfield$ then
$(\cA\circ\cA)(1)=\kfield\otimes\kfield=\kfield$ and $m$ is the identity as well as
$\cA\circ\varepsilon_\cA$ and $\varepsilon_\cA$. So the diagram is commutative.
Furthermore, we have seen in Section \ref{schurfunctors} that
$\ve_\cA\cdot e_\cA=1$.
A similar proof shows that $e_\cC$ is a grouplike morphism for the comonad $\bG_{\cC}$.
\end{proof}

\begin{thm} {\bf Distributive laws and generalised bialgebras.} {\em \
Let $\cA$ be an operad and $\cC$ be a cooperad.

\textbf{(H0)} A distributive law between $\cA$ and $\cC$ is a morphism of $\Sy$-modules
$\cA\circ \cC\rightarrow \cC\circ \cA$
satisfying some relations which amount to say that the corresponding natural
transformation
$$\lambda:F_{\cA\circ\cC}=F_\cA F_\cC\lra F_\cC F_\cA=F_{\cC\circ\cA}$$
 is an entwining.   If such an entwining exists,
we say, as in \cite{Lod}, that {\it hypothesis} (H0) \emph{is satisfied}. Under this hypothesis,
an object $(V,h,\theta)$ in $(\A_{\bT_\cA})^{{\widehat \bG_\cC}}$ is called a {\it $(\cC,\cA)$-bialgebra}.
\medskip

\textbf{(H1)} Assume that there is  a map
$\alpha:\cA\rightarrow \cC\circ \cA$ making $\cA$ a left $\cC$-co\-module, that is,
every
free $\cA$-algebra is endowed with a structure of a $\cC$-coalgebra.
This amounts to say that there is a
functor $K:\A\lra(\A_{\bT_\cA})^{{\widehat \bG_\cC}}$ such that the diagram
(\ref{com.triangle}) is commutative.
If such a functor exists, we say, as in \cite{Lod},
that {\it hypothesis} (H1) \emph{is satisfied}. The
corresponding left $\bG_\cC$-comodule structure on $\bT_\cA$ is given by
$\alpha: F_\cA\rightarrow F_\cC F_\cA$.

At the level of $\Sy$-modules one gets that $\alpha_1:\cA(1)=\kfield\rightarrow (\cC\circ\cA)(1)=\kfield$
is the identity, because $(\ve_\cC\circ\cA)\cdot\alpha=1$, so that
$$\alpha\cdot e_\cA=e_\cC F_\cA\cdot e_\cA=F_\cC e_\cA\cdot e_\cC.$$
Thus, the diagram
\begin{equation}\label{aug}
\xymatrix @C=1.1in{
&F_\cA\ar[rd]^{\alpha} &\\
1 \ar[ru]^{e_\cA} \ar@{->}[r]_(0.65){e_\cA} \ar[rd]_{e_\cC}& F_\cA \ar@{->}[r]_(0.35){e_\cC T}& F_\cC F_\cA\\
&F_\cC \ar[ru]_{F_\cC e_\cA}&}
\end{equation} commutes. As a consequence, if $(V,h) \in \A_{\bT_\cA}$, then
$$F_\cC(h)\cdot  \alpha_V \cdot (e_\cA)_V=F_\cC(h) \cdot(F_\cC e_\cA)_V\cdot (e_\cC)_V=(e_\cC)_V,$$
and since the $(V,h)$-component $\beta_{(V,h)}$ of the right $\bG_\cC$-comodule structure on
$\beta:U_{\bT_\cA} \to U_{\bT_\cA}\widehat{F_\cC}$ is just the composite $F_\cC (h)\cdot  \alpha_V \cdot (e_\cA)_V$,
we get
\begin{equation}\label{Reduction}
\beta_{(V,h)}=(e_\cC)_V.
\end{equation} Thus, $\beta$ is defined by the grouplike morphism $e_\cC:1\rightarrow F_\cC$
and hence the comparison functor $K:\A\lra(\A_{\bT_\cA})^{{\widehat \bG_\cC}}$ is induced by this grouplike morphism, i.e. $K=K_{e_\cC}$.
So we can apply the results of the previous sections to the present setting, in particular,
(\ref{e.2.2}) gives
\begin{equation}\label{Reduction.1}
\alpha=\lambda \cdot F_\cA e_\cC.
\end{equation}

\medskip

We assume that the hypotheses (H0) and (H1) hold. Consider the $\cC$-comodule map
$\varphi:\cA\rightarrow \cC$ induced by the projection $\ve_\cA: \cA \rightarrow 1$.
Since $\varphi=(\cC \circ \ve_\cA)\cdot \alpha$, where $\alpha :\cA \rightarrow \cC\circ \cA$
is the $\cC$-comodule morphism of hypothesis (H1), one has, for every $\mu\in\cA(n)$,
\begin{equation}\label{E:alpha}
\alpha(\mu)=\varphi_n(\mu)\otimes 1^{\otimes n}+\sum\limits_{k<n} c_k^\mu\otimes \alpha_1^\mu\otimes\ldots\otimes \alpha_k^\mu,
\end{equation}
where $\varphi_n$ is the component of $\varphi$
on $\cA(n)$, $c_k^\mu\in\cC(k)$, and $\alpha_i^\mu\in\cA(l_i)$ with $\sum_{i=1}^k l_i=n$.
\smallskip

\textbf{(H2iso)} When $\varphi$ is an isomorphism, we say, as in \cite{Lod},
that {\it hypothesis} (H2iso) \emph{is satisfied}.

\medskip
In the sequel we will be interested in the link between $\varphi$ and the comonad
morphism
$t:\phi_{\bT_\cA} U_{\bT_\cA}\lra \widehat F_\cC$ as in section \ref{s.s.1.2}.
Recall that for every
$(V,h)\in\A_{\bT_\cA}$, $t_{(V,h)}$ is the composite
$$
F_\cA(V)\xrightarrow{\alpha_V}  F_\cC F_\cA(V)\xrightarrow{F_\cC h}  F_\cC (V).
$$

\begin{lemma}\label{L:H2} Assume the hypotheses {\rm (H0)} and {\rm (H1)}. Then
the map $\varphi$ is an isomorphism if and only if $t$ is an isomorphism.
\end{lemma}

\begin{proof} We use the natural arity-grading on $\Sy$-modules.
Given $\mu\in\cA(n), \underline v\in V^{\otimes n}$, one has
$$t_{(V,h)} (\mu\otimes\underline v)=
 \varphi_n(\mu)\otimes\underline v+
 \sum_{k<n}c_k^\mu\otimes \alpha_1^\mu(\underline v_1)\otimes\ldots\otimes \alpha_k^\mu(\underline v_k),$$
where $\underline v=\underline v_1\otimes\ldots\otimes \underline v_k\in V^{\otimes l_1}\otimes\ldots\otimes V^{\otimes l_k}$.
This is a triangular system with dominant coefficient $\varphi_n$.
As a consequence, we get that
%if $\varphi$ is surjective so is $t_{(V,h)}$, and that
if $\varphi$ is an isomorphism so is $t_{(V,h)}$.
The converse is immediate because $\varphi_V=t_{(V,(\ve_\cA)_V)}$ for all $V \in \A$.
\end{proof}}
\end{thm}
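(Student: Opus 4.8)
The plan is to use the arity-grading on $\Sy$-modules to present $t$ as a triangular map whose diagonal is exactly $\varphi$, and then to read off both implications from this triangular form.

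First I would make the action of $t_{(V,h)}$ explicit. For $(V,h)\in\A_{\bT_\cA}$ its component is the composite $F_\cA(V)\xrightarrow{\alpha_V}F_\cC F_\cA(V)\xrightarrow{F_\cC(h)}F_\cC(V)$, so I would feed a generator $\mu\otimes\underline v$ with $\mu\in\cA(n)$, $\underline v\in V^{\otimes n}$ through the expansion (\ref{E:alpha}) of $\alpha$. The leading summand $\varphi_n(\mu)\otimes 1^{\otimes n}$, whose $\cA(1)$-factors are the unit of $\cA$, lands in the arity-one part of each $F_\cA(V)$-factor; since a module structure satisfies $h\cdot(e_\cA)_V=\mathrm{id}_V$, applying $F_\cC(h)$ returns $\varphi_n(\mu)\otimes\underline v$ untouched. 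Every remaining summand $c_k^\mu\otimes\alpha_1^\mu\otimes\dots\otimes\alpha_k^\mu$ has $k<n$ and, after $F_\cC(h)$ collapses each block, contributes a term in $\cC(k)\otimes V^{\otimes k}$. This yields the triangular expansion with dominant coefficient $\varphi_n$.

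Next I would phrase this as a filtration argument. Grading $F_\cA(V)$ and $F_\cC(V)$ by arity and writing $F_{\le N}$ for the sum of the pieces of arity at most $N$, the expansion shows that $t_{(V,h)}$ is filtered and that the map it induces on the arity-$n$ associated graded is exactly $\varphi_n\otimes\mathrm{id}_{V^{\otimes n}}$. If $\varphi$ is an isomorphism, then each $\varphi_n$ is, so $t_{(V,h)}$ is an isomorphism on every graded piece; as the filtration is bounded below (arities start at $1$) and exhaustive, a filtered morphism inducing isomorphisms on the associated graded is itself invertible, which proves the forward implication. For the converse I would evaluate $t$ at one distinguished module: by Proposition \ref{op:compatible} the augmentation $\ve_\cA:F_\cA\to 1$ is a monad morphism, so $(V,(\ve_\cA)_V)$ is a $\bT_\cA$-module, and unwinding $\varphi=(\cC\circ\ve_\cA)\cdot\alpha$ at the functor level gives $\varphi_V=F_\cC((\ve_\cA)_V)\cdot\alpha_V=t_{(V,(\ve_\cA)_V)}$. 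Hence if $t$ is a (componentwise) isomorphism, then $\varphi_V$ is invertible for every $V$, and under our standing assumptions (characteristic $0$ or free $S_n$-action) the arity-$n$ component is recoverable from the Schur functor, so $\varphi$ is an isomorphism.

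The hard part will be the first step: tracking precisely how $F_\cC(h)$ acts on each summand of $\alpha_V(\mu\otimes\underline v)$ and verifying that the $k=n$ term survives undistorted while all $k<n$ terms strictly drop in arity. Once that triangular formula is secured, both directions are formal — the forward one by the associated-graded principle and the converse by specialising to the augmentation module.
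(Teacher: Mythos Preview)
Your proposal is correct and follows essentially the same approach as the paper: both arguments compute $t_{(V,h)}(\mu\otimes\underline v)$ via the expansion (\ref{E:alpha}), observe that the result is upper-triangular in the arity grading with $\varphi_n$ on the diagonal, and for the converse specialise to the augmentation module $(V,(\ve_\cA)_V)$ to recover $\varphi_V=t_{(V,(\ve_\cA)_V)}$. Your write-up is slightly more explicit than the paper's in two places---you spell out why the leading term survives (via $h\cdot(e_\cA)_V=\mathrm{id}_V$) and you cast the triangular argument in filtration/associated-graded language rather than just invoking ``triangular system''---and you also make explicit the step the paper leaves tacit, namely that invertibility of $\varphi_V$ for all $V$ forces invertibility of $\varphi$ as an $\Sy$-module map under the standing hypotheses; but none of this constitutes a genuinely different route.
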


\begin{thm} {\bf The primitive part of a $(\cC,\cA)$-bialgebra.}{\em \
Because the category of $\kfield$-vector spaces admits equalisers, under the hypotheses
(H0) and (H1), the functor $K$ admits a right adjoint $\dK$ whose value
at $((H,h),\theta)\in (\A_{\bT_\cA})^{\widehat{\bG_\cC}}$
appears as the equaliser
$$\xymatrix{\dK((H,h), \theta)\ar[rr]^-{i_{((H,h),\theta)}} &&H \ar@{->}@<0.5ex>[r]^-{\theta} \ar@
{->}@<-0.5ex> [r]_-{(e_\cC)_{H}}& \cC(H)\,.}$$
As a consequence,
$$\dK((H,h), \theta)=\{x\in H,\theta(x)=1\otimes x\},$$
and thus $\dK((H,h), \theta)$ is just the {\it primitive part} ${\hbox{\rm Prim}}V$ of the
$(\cC,\cA)$-bialgebra $(H,h,\theta)$ in the sense of Loday \cite{Lod}.}
\end{thm}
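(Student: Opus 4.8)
The plan is to compute the right adjoint $\overline{R}$ explicitly using the general formula established in Theorem~\ref{s.s.1.3} (equation~\eqref{eq.}), then translate the equaliser of $\kfield$-vector spaces into a concrete kernel-type description. First I would invoke the setup from hypothesis (H1): since the comparison functor $K:\A\to(\A_{\bT_\cA})^{\widehat{\bG_\cC}}$ is induced by the grouplike morphism $e_\cC:1\to F_\cC$ (as shown in the display leading to \eqref{Reduction}), the relevant right $\bG_\cC$-comodule structure $\beta$ on $U_{\bT_\cA}$ has components $\beta_{(V,h)}=(e_\cC)_V$. Plugging this into the general equaliser formula \eqref{eq.}, with $R=U_{\bT_\cA}$ and $G=F_\cC$, gives precisely the displayed equaliser diagram: $\overline{R}((H,h),\theta)$ is the equaliser in $\A$ of the pair $\theta,(e_\cC)_H:H\rightrightarrows\cC(H)=F_\cC(H)$.

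The second step is to identify this equaliser concretely. In the category of $\kfield$-vector spaces, the equaliser of two linear maps $f,g:H\to W$ is the subspace $\{x\in H : f(x)=g(x)\}$. Here $g=(e_\cC)_H$ is, by definition of $e_\cM$ in \ref{schurfunctors}, the inclusion of $H$ as the arity-$1$ summand $\cC(1)\otimes_{\kfield[S_1]}H=\kfield\otimes H\cong H$ of $F_\cC(H)=\bigoplus_{n>0}\cC(n)\otimes_{\kfield[S_n]}H^{\otimes n}$; under the identification $\kfield\otimes H\cong H$ this sends $x\mapsto 1\otimes x$. Therefore the equaliser is $\{x\in H : \theta(x)=1\otimes x\}$, which is exactly the definition of the primitive part $\mathrm{Prim}\,H$ of the $(\cC,\cA)$-bialgebra $(H,h,\theta)$ in Loday's sense.

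The only genuine content beyond bookkeeping is making sure the general equaliser formula applies, i.e. that $\A$ (vector spaces) admits equalisers of coreflexive pairs --- which it does, as it admits all equalisers --- and that the hypotheses (H0) and (H1) supply exactly the data ($\lambda$ an entwining, $K$ a functor lifting $\phi_{\bT_\cA}$) required by Theorem~\ref{s.s.1.3} and by \ref{mon-comon}. I expect the main (very mild) obstacle to be purely notational: keeping track of the identification $\cC(1)\otimes_{\kfield[S_1]}H\cong H$ so that $(e_\cC)_H$ is correctly seen as $x\mapsto 1\otimes x$, and confirming that this is the arity-$1$ component of $\theta$ under the grading, so that the equation $\theta(x)=1\otimes x$ is the statement that all higher-arity components of $\theta(x)$ vanish --- i.e. conilpotence-type primitivity. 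Once that identification is in hand, the statement follows immediately from \eqref{eq.} and \eqref{Reduction}, with no further calculation required.
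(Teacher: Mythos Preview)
Your proposal is correct and follows exactly the paper's own reasoning: the paper derives the equaliser description by combining the existence of equalisers in $\kfield$-vector spaces with the general formula \eqref{eq.eq} (which specialises \eqref{eq.}) and the identification $\beta_{(V,h)}=(e_\cC)_V$ from \eqref{Reduction}, then reads off the concrete description $\{x\in H:\theta(x)=1\otimes x\}$. Your treatment is simply a more explicit unpacking of the same argument, including the identification of $(e_\cC)_H$ with $x\mapsto 1\otimes x$ via $\cC(1)=\kfield$.
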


\smallskip

We are now in the position to state and prove our main result.

\begin{thm} {\bf Rigidity Theorem.}\label{mein.th} \emph{(\cite[Theorem 2.3.7]{Lod})}
Let $\cA$ be an operad,
$\cC$ a cooperad, and $\bT_\cA=(F_\cA,m,e_\cA)$ and $\bG_\cC=(F_\cC,\delta,\varepsilon_\cC)$ the corresponding monad and comonad on $\A$.
Suppose that the hypotheses
{\rm (H0), (H1)} and {\rm (H2iso)} are fulfilled. Then the comparison functor
$$K_{e_\cC}:\A\lra (\A_{\bT_\cA})^{\widehat{\bG_\cC}}$$
is an equivalence of categories. Hence, in particular, any
$(\cC,\cA)$-bialgebra $(H,h,\theta)$ is a free $\cA$-algebra
and a cofree conilpotent $\cC$-coalgebra generated by $\hbox{\rm Prim}H$.
\end{thm}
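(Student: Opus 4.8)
The plan is to verify the two hypotheses (i) and (ii) of Theorem \ref{fun.th.1} applied to the monad $\bT_\cA$, the comonad $\bG_\cC$, the entwining $\lambda$ of (H0), and the grouplike morphism $e_\cC:1\to F_\cC$ coming from Proposition \ref{op:compatible}; the preceding discussion already shows that under (H0) and (H1) the comparison functor $K$ coincides with $K_{e_\cC}$, so once (i) and (ii) hold, Theorem \ref{fun.th.1} immediately gives that $K_{e_\cC}$ is an equivalence. For condition (ii), I would use the form of the composite in (\ref{composite.1}), which for $(V,h)\in\A_{\bT_\cA}$ is precisely the map $t_{(V,h)}\colon F_\cA(V)\xrightarrow{\alpha_V}F_\cC F_\cA(V)\xrightarrow{F_\cC h}F_\cC(V)$ discussed just above; by Lemma \ref{L:H2}, $t$ is an isomorphism if and only if $\varphi$ is an isomorphism, and this is exactly hypothesis (H2iso). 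So (ii) is immediate from (H2iso) and Lemma \ref{L:H2}.

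The main work is condition (i): that $\phi_{\bT_\cA}\colon\A\to\A_{\bT_\cA}$ is comonadic. Here I would invoke the finiteness and characteristic hypotheses on $\Sy$-modules from the ``Operads, cooperads'' paragraph: each $\cM(n)$ is finite-dimensional and either the $S_n$-action is free or $\mathrm{char}\,\kfield=0$, so each $\cM(n)$ is a projective (indeed, in char $0$, by Maschke, a summand of a free) $\kfield[S_n]$-module; consequently the Schur functor $F_\cA$ preserves all colimits and reflexive coequalisers, so $\bT_\cA$ is a ``nice'' monad. Comonadicity of the free functor into $\A_{\bT_\cA}$ can then be obtained exactly as in the abstract situation treated in Theorem \ref{identity}: by \cite[Corollary 3.17]{Me} (cited there), $\phi_\bT$ is comonadic whenever $e:1\to T$ is a split monomorphism, equivalently whenever $\ve\cdot e=1$. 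In the present case $\ve_\cA\cdot e_\cA=1$ by construction (see \ref{schurfunctors}), so $e_\cA:1\to F_\cA$ is a split monomorphism of functors, and $\phi_{\bT_\cA}$ is comonadic. Thus (i) holds unconditionally here, and Theorem \ref{fun.th.1} gives the equivalence $K_{e_\cC}\colon\A\xrightarrow{\ \sim\ }(\A_{\bT_\cA})^{\widehat{\bG_\cC}}$.

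It remains to read off the final ``in particular'' sentence. Given the equivalence $K_{e_\cC}$, any $(\cC,\cA)$-bialgebra $(H,h,\theta)$ is, up to isomorphism in $(\A_{\bT_\cA})^{\widehat{\bG_\cC}}$, of the form $K_{e_\cC}(V)=((F_\cA(V),m_V),(\lambda_V\cdot F_\cA(e_\cC)_V))$ for a unique-up-to-isomorphism $V\in\A$; forgetting the coalgebra structure shows it is the free $\cA$-algebra on $V$, and forgetting the algebra structure shows that its underlying $\cC$-coalgebra is $(F_\cC(W),\delta_W)$ for the appropriate $W$, i.e.\ cofree conilpotent (conilpotence being automatic under our definition of cooperad, as noted in the ``Operads, cooperads'' paragraph). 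Finally, by the computation of the right adjoint $\dK$ in the ``primitive part'' paragraph, applying $\dK=\overline R$ to $K_{e_\cC}(V)$ recovers $V$ as $\mathrm{Prim}\,H$, so $H$ is the free $\cA$-algebra and cofree conilpotent $\cC$-coalgebra on $\mathrm{Prim}\,H$, as claimed.

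The step I expect to be the real obstacle is the verification that the finiteness/characteristic hypotheses genuinely deliver what is needed for comonadicity of $\phi_{\bT_\cA}$ in the stated generality; if one does not want to lean on $\ve_\cA\cdot e_\cA=1$ together with \cite[Corollary 3.17]{Me}, one would instead have to check directly that $\A_{\bT_\cA}$ has, and $U_{\bT_\cA}$ preserves, equalisers of $U_{\bT_\cA}$-split pairs and that $U_{\bT_\cA}$ reflects isomorphisms — a routine but slightly delicate argument using exactness of $F_\cA$. Everything else is a matter of assembling Theorem \ref{fun.th.1}, Lemma \ref{L:H2}, and the description of $\dK$.
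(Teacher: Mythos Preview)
Your proposal is correct and matches the paper's proof: apply Theorem \ref{fun.th.1}, using Lemma \ref{L:H2} together with (H2iso) for condition (ii), and $\ve_\cA\cdot e_\cA=1$ with \cite[Corollary 3.17]{Me} (plus Cauchy completeness of $\A$, trivial for vector spaces) for condition (i). Your detour through the finiteness/characteristic hypotheses is unnecessary --- the paper invokes $\ve_\cA\cdot e_\cA=1$ directly, so what you flag as the ``real obstacle'' is in fact not one.
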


\begin{proof} Because the hypothesis (H2iso) is satisfied, it follows from Lemma \ref{L:H2} that
$t_{(V,h)}$ is an isomorphism for all $(V,h)\in \A_{F_{\cA}}$.
Moreover, since $\ve_\cA \cdot e_\cA=1$,
and since $\A$ is clearly Cauchy complete, the functor $\phi_{\bT_\cA} :\A \to \A_{\bT_\cA}$ is comonadic by
\cite[Corollary 3.17]{Me}.
Applying now Theorem \ref{fun.th.1}, we get the result.
\end{proof}

\begin{thm} {\bf Remark.}{\em \ In \cite{Lod}, for the proof of this theorem,
Loday builds idempotents to produce a projection onto the primitive part.
An advantage of our proof is that it does not need such a construction.}
\end{thm}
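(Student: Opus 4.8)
The statement is a \emph{Remark}: it records a methodological comparison between the present derivation of the Rigidity Theorem and Loday's original argument, and as such it carries no mathematical assertion that would require a proof in the usual sense. The plan, therefore, is not to prove anything but to substantiate the comparison by inspecting the two arguments side by side. First I would recall what Loday's approach in \cite{Lod} actually does: it constructs an idempotent natural endomorphism of each $(\cC,\cA)$-bialgebra whose image realises the primitive part $\mathrm{Prim}\,H$, and then uses the splitting furnished by that idempotent to exhibit the bialgebra as a free $\cA$-algebra cogenerated by its primitives.

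Against this, I would point to the proof of Theorem \ref{mein.th} just given and observe that it never introduces any idempotent or any projection onto the primitive part. Instead, the entire argument is routed through the abstract equivalence criterion of Theorem \ref{fun.th.1}, which demands only two things: that the comonad morphism $t$ be an isomorphism, and that the free functor $\phi_{\bT_\cA}$ be comonadic. The first is supplied by hypothesis (H2iso) together with Lemma \ref{L:H2}, which translates the isomorphism of $\varphi$ into the isomorphism of $t$ by a purely triangular-system argument on the arity grading. The second follows at once from \cite[Corollary 3.17]{Me}, using only that $\ve_\cA\cdot e_\cA=1$ and that $\A$ is Cauchy complete.

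Thus the justification of the Remark reduces to the observation that none of the three ingredients, namely Theorem \ref{fun.th.1}, Lemma \ref{L:H2}, and the comonadicity criterion of \cite{Me}, invokes an idempotent or constructs a splitting by hand; the primitive part enters only abstractly, as the value $\dK((H,h),\theta)$ of the right adjoint $\dK$, computed as an equaliser. The one point I would be careful to articulate clearly is that the role played by Loday's idempotent is here absorbed into the comonadicity of $\phi_{\bT_\cA}$: it is the comonadicity criterion, applied via \cite{Me}, that guarantees the reconstruction of a bialgebra from its primitives, and it produces the required equalisers abstractly rather than through an explicitly exhibited projection. Since there is no computation to carry out, there is no genuine obstacle; the only task is the expository one of making this contrast precise.
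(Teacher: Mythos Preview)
Your proposal is correct: the statement is a Remark with no proof in the paper, and you rightly identify that there is nothing to prove. Your expository substantiation of the contrast between Loday's idempotent construction and the present categorical route via Theorem~\ref{fun.th.1}, Lemma~\ref{L:H2}, and the comonadicity criterion of \cite{Me} is accurate and goes somewhat beyond what the paper itself supplies, since the paper simply asserts the Remark without further commentary.
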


The following corollary is a special case of the Rigidity Theorem, where it is not necessary to verify hypothesis (H2iso).

\begin{corollary}\label{bialgebra}
Let $\cM$ be an $\Sy$-module carrying a structure of an operad $\cA=(\cM,m,e_\cM)$,
a structure of cooperad $\cC=(\cM,\delta,\varepsilon_\cM)$, and let
\begin{center}
$\lambda : \cM\circ \cM\to \cM\circ\cM$
\end{center}
be an entwining between $\cA$ and $\cC$.
If one of the three equivalent conditions
$$\begin{array}{ccc}
{\rm (i) } \; \lambda \text{ is compatible},& {\rm(ii) }\;
\delta=\lambda\cdot (\cM\circ e_\cM),& {\rm (iii) }\;
 m=(\cM\circ\varepsilon_\cM)\cdot\lambda,
\end{array}$$
holds, then the compatible monad-comonad triple $(\bT_\cA,\bG_\cC,\lambda)$ is a
Hopf monad. Moreover,
any $(\cC,\cA)$-bialgebra  is a free $\cA$-algebra
and a cofree conilpotent $\cC$-coalgebra.
\end{corollary}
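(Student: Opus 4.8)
The plan is to derive Corollary \ref{bialgebra} from Theorem \ref{identity} together with the machinery of Section \ref{Loday}. First I would observe that the three conditions (i), (ii), (iii) are equivalent: by Lemma \ref{liv}, either of the equations $m=(\cM\circ\varepsilon_\cM)\cdot\lambda$ or $\delta=\lambda\cdot(\cM\circ e_\cM)$ implies that $\lambda$ is compatible, so (ii)$\Rightarrow$(i) and (iii)$\Rightarrow$(i). For the reverse implications, recall that by Proposition \ref{op:compatible} the natural transformation $\varepsilon_\cC$ is an augmentation of $\bT_\cA=(F_\cA,m,e_\cA)$ and $e_\cC$ is a grouplike morphism of $\bG_\cC=(F_\cC,\delta,\varepsilon_\cC)$; since here $\cA$ and $\cC$ share the same underlying $\Sy$-module $\cM$, this says precisely that $\varepsilon\cdot m=\varepsilon\cdot H\varepsilon$ and $\delta\cdot e=He\cdot e$ in the notation of Section \ref{Compat}. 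Hence, by Proposition \ref{bimonad}(1) and (2), if $\lambda$ is compatible then automatically $\delta=\lambda\cdot(\cM\circ e_\cM)$ and $m=(\cM\circ\varepsilon_\cM)\cdot\lambda$. So all three conditions hold once any one of them does, and in particular $(\bT_\cA,\bG_\cC,\lambda)$ is a compatible monad-comonad triple with $\delta=\lambda\cdot He$.

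Next I would check the hypotheses (H0) and (H1) of Section \ref{Loday}. Hypothesis (H0) is exactly the assumption that $\lambda$ is an entwining, which is given. For (H1), the map $\alpha:=\lambda\cdot F_\cA e_\cC:F_\cA\to F_\cC F_\cA$ makes $F_\cA$ a left $\bG_\cC$-comodule: this is precisely the content of Proposition \ref{Hopf.1} applied to the compatible triple $(\bT_\cA,\bG_\cC,\lambda)$, using that $e_\cC$ is grouplike (Proposition \ref{op:compatible}). Thus (H1) holds and, moreover, the comparison functor is $K=K_{e_\cC}$. Now invoke Theorem \ref{identity}, whose hypotheses are satisfied: $\A$ (the category of $\kfield$-vector spaces) is Cauchy complete, $\delta=\lambda\cdot He$, and $e=e_\cC$ is a componentwise monomorphism since each $(e_\cC)_V:V\to F_\cC(V)$ is the inclusion of the arity-one summand (here $\cM(1)=\kfield$). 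Furthermore $\varepsilon_\cC\cdot m=\varepsilon_\cC\cdot H\varepsilon_\cC$ because $\varepsilon_\cC$ is an augmentation of $\bT_\cA$, so the final clause of Theorem \ref{identity} applies and $\bH=(\bT_\cA,\bG_\cC,\lambda)$ is a Hopf monad.

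It remains to conclude that $K_{e_\cC}$ is an equivalence, i.e.\ that one of the equivalent conditions (a)--(c) of Theorem \ref{identity} holds; in our setting condition (c) reads: the composite $HH\xrightarrow{\delta H}HHH\xrightarrow{Hm}HH$ is an isomorphism. This is the one genuine point to verify, and it is where the operadic hypotheses beyond mere compatibility come in — note that the corollary deliberately does \emph{not} assume (H2iso). Here I would use the Hopf monad structure just established: for a Hopf monad with antipode $S$, the composites $m\cdot HS\cdot\delta$ and $m\cdot SH\cdot\delta$ equal $e\cdot\varepsilon$, and together with $\varepsilon\cdot e=1$ (which follows from Proposition \ref{conditions} since $e_\cC$ is monic) one builds a two-sided inverse to $Hm\cdot\delta H$ out of $S$, $m$, $\delta$, $e$ and $\varepsilon$ in the standard way. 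Alternatively, and perhaps more cleanly, one can argue directly in terms of the arity-grading: since $\lambda$ is compatible and $\delta=\lambda\cdot He$, the map $t_{(V,h)}:F_\cA(V)\to F_\cC(V)$ of Section \ref{Loday} specialises at $h=m_V$ to $Hm\cdot\delta H$ evaluated appropriately, and by Lemma \ref{L:H2} its invertibility is equivalent to $\varphi=(\cC\circ\varepsilon_\cA)\cdot\alpha$ being an isomorphism; but when $\cA$ and $\cC$ share the underlying $\Sy$-module and $\alpha=\lambda\cdot F_\cA e_\cC$ is built from the entwining, $\varphi$ is, arity by arity, the identity of $\cM(n)$ up to the triangular lower-order terms, hence an isomorphism. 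I expect this verification that $\varphi$ (equivalently condition (c)) is an isomorphism to be the main obstacle, since it is the only place where the coincidence of the two $\Sy$-module structures is used in an essential way rather than formally. Once (c) is in hand, Theorem \ref{identity} gives that $K_{e_\cC}$ is an equivalence, and the last assertion — that every $(\cC,\cA)$-bialgebra $(H,h,\theta)$ is a free $\cA$-algebra and a cofree conilpotent $\cC$-coalgebra (necessarily on $\mathrm{Prim}\,H$) — is then immediate from the description of the equivalence and its inverse, exactly as in the Rigidity Theorem \ref{mein.th}.
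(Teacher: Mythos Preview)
Your overall strategy matches the paper's: reduce to Theorem \ref{identity} and verify its hypotheses using Proposition \ref{op:compatible} and Proposition \ref{bimonad}. The equivalence of (i), (ii), (iii) and the verification that $(\bT_\cA,\bG_\cC,\lambda)$ is a bimonad are handled correctly.

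There is, however, a genuine gap at the key step, and your two suggested routes to it both miss the point.

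Your route (a) is circular. The final clause of Theorem \ref{identity} is not free-standing: the conclusion ``$\bH$ is a Hopf monad'' is obtained via \cite[3.1]{MW-Op}, which needs the Galois map $Hm\cdot \delta H$ --- i.e.\ condition (c) --- to be invertible. The paper's own proof of the corollary reflects this order: it first establishes that $K$ is an equivalence and only \emph{then} invokes \cite[3.1]{MW-Op} to deduce the Hopf monad property. So you cannot use the antipode to manufacture the inverse of $Hm\cdot\delta H$.

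Your route (b) is on the right track but overcomplicates the computation. You claim $\varphi$ is ``the identity of $\cM(n)$ up to triangular lower-order terms''. In fact $\varphi$ is \emph{exactly} the identity. Here is the one-line reason, which is what the paper does: because $\cA$ and $\cC$ share the underlying $\Sy$-module $\cM$, one has $\ve_\cA=\ve_\cC=\ve_\cM$ and, by condition (ii), $\alpha=\lambda\cdot F_\cA e_\cC=\lambda\cdot He=\delta$. Hence
\[
\varphi=(F_\cC\ve_\cA)\cdot\alpha=(H\ve_\cM)\cdot\delta=1
\]
by the counit axiom of the comonad $\bG_\cC$. Equivalently, $\varphi_V=t_{(V,\ve_V)}=\cM(\ve_V)\cdot\delta_V=1$. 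Thus (H2iso) holds automatically, Lemma \ref{L:H2} gives that $t$ is an isomorphism (condition (b) of Theorem \ref{identity}), and the equivalence $K_{e_\cC}$ follows. Only after this does one conclude Hopf monad via \cite[3.1]{MW-Op}. No triangular-system argument is needed; the coincidence of the $\Sy$-modules is used precisely to identify $\ve_\cA$ with the counit and $\alpha$ with $\delta$, collapsing $\varphi$ to the identity.
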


\begin{proof}Let us denote by $\cH$ the monad-comonad triple $(\bT_\cA,\bG_\cC,\lambda)$.
By Proposition \ref{op:compatible}, the triple satisfies
Relations (\ref{D.1.18b}), and since $e_\cM$ is a componentwise monomorphism, $\cH$ is a
bimonad by Proposition \ref{bimonad}.
Thus there is a comparison
functor
$$K:\A \to (\A_{\bT_\cA})^{\widehat{\bG_\cC}}, \;
V \longmapsto ((\cM(V),m_V), \delta_V),$$
and $K=K_{e_\cM}$.

We can apply Theorem \ref{identity}
to conclude that the functor $K$ is an equivalence of categories if and only if
the composite
$$\cM(V) \xrightarrow{\delta_V}  (\cM\circ\cM)(V) \xrightarrow{\cM(h)} \cM(V)$$ is
an isomorphism for every $(V,h) \in
\A_{\bT_\cA}$. But  $\cM(h) \cdot \delta_V=t_{(V,h)}$, where $t_{(V,h)}$ is  the
$(V,h)$-component of the comonad morphism
$t:\phi_{\bT_\cA} U_{\bT_\cA} \to \widehat{\bG_\cC}$
induced by $K$. It follows that $\varphi_V=t_{(V,\,\ve_V)}=\cM(\ve_V) \cdot \delta_V=1$
for every
$V \in \A$. Thus $\varphi$ is an isomorphism and then $t$ is also an isomorphism by
Lemma \ref{L:H2}.
Hence $K$ is an equivalence of categories. It now follows from \cite[3.1]{MW-Op}
that $\bH$ is a Hopf monad.
Furthermore, the Rigidity Theorem applies to our case because (H2iso) is satisfied.
\end{proof}

\begin{thm} {\bf Example.} {\em As an example we treat the case of infinitesimal bialgebras. Consider the
functor $V\mapsto \cA(V)=\oplus_n V^{\otimes n}$. It forms a monad $\bT=(\cA,m,e)$ for the concatenation
product. One can formulate this as

$$\begin{array}{ccccccccc}
m_V:& \cA_1\cA_2(V) & \rightarrow & \cA (V) & & e_V:& V & \rightarrow & \cA (V)\\
&\otimes_1 & \mapsto & \otimes & & & v &\mapsto &  v\\
&\otimes_2 & \mapsto & \otimes &&&&& \\
& v & \mapsto &v  &&&&&
\end{array}$$
where $\cA_1$ denotes the ``first copy" of $\cA$.
It reads like this:  any word in $\cA_1\cA_2(V)$ is composed with letters in $\{\otimes_1,\otimes_2,v\in V\}$
and the map indicates how it acts on letters.

The functor $\cA$ forms a comonad $\bG=(\cA,\delta,\ve)$ with the deconcatenation
$$\begin{array}{ccccccccc}
\delta_V:& \cA (V) & \rightarrow & \cA_1\cA_2 (V) & &  \ve_V:& \cA (V) & \rightarrow & V\\
&\otimes & \mapsto & \otimes_1+\otimes_2 & & & \otimes &\mapsto& 0\\
&v & \mapsto &v &&& v &\mapsto &v \, .\end{array}$$

The infinitesimal distributive law reads
$$\begin{array}{cccc}
\lambda_V:& \cA_1\cA_2(V) & \rightarrow & \cA_1\cA_2(V) \\
&\otimes_1 & \mapsto & \otimes_1+\otimes_2 \\
&\otimes_2 & \mapsto & \otimes_1 \\
& v & \mapsto &v \, .
\end{array}$$
We easily see that $m$ is associative, $\delta$ is coassociative, and $\lambda$ is an entwining.

As an example, we check one of the diagrams for $\lambda$ (see \ref{f.entw}):
$$
\xymatrix{
   \cA_1\cA_2\cA_3\ar[rr]^{m \cA} \ar[d]_{\cA\lambda} & & \cA_1\cA_2 \ar[d]^\lambda \\
   \cA_1\cA_2\cA_3 \ar[r]_{\lambda \cA} & \cA_1\cA_2\cA_3 \ar[r]_{\cA m} & \cA_1\cA_2 \,.} $$
The top arrows send $\otimes_1\mapsto\otimes_1\mapsto \otimes_1+\otimes_ 2$,
$\otimes_2\mapsto\otimes_1\mapsto\otimes_1+\otimes_2$ and $\otimes_3\mapsto\otimes_2\mapsto\otimes_1$,
while the lower maps send
$\otimes_1\mapsto  \otimes_1\mapsto  \otimes_1+\otimes_2\mapsto \otimes_1+\otimes_2$,
$\otimes_2\mapsto \otimes_2+\otimes_3\mapsto \otimes_1+\otimes_3\mapsto \otimes_1+\otimes_2$ and
$\otimes_3\mapsto \otimes_2\mapsto \otimes_1\mapsto \otimes_1$, which proves commutativity of this diagram.

We have clearly $\delta=\lambda \cdot \cA e$ and $m= \cA\ve\cdot \lambda$. %, and since $\ve \cdot e=1$, $\cA$
Consequently, Theorem \ref{bialgebra} holds.
Hereby we recover the Rigidity Theorem of Loday and Ronco for infinitesimal bialgebras which
says that
any infinitesimal bialgebra is freely and cofreely generated by its primitive part
(see \cite[Theorem 2.6]{LR}).
}
\end{thm}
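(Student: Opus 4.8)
The strategy is to present the given data as a concrete instance of Corollary~\ref{bialgebra} and then recognise the classical statement inside the resulting equivalence. First I would pin down the underlying $\Sy$-module: the functor $V\mapsto\cA(V)=\bigoplus_{n>0}V^{\otimes n}$ is $F_\cM$ for the $\Sy$-module $\cM(n)=\kfield[S_n]$ (the associative operad), since $\kfield[S_n]\otimes_{\kfield[S_n]}V^{\otimes n}\cong V^{\otimes n}$; in particular $\cM(1)=\kfield$, so the standing assumptions of Section~\ref{Loday} are in force, the concatenation $(m,e)$ endows $\cM$ with an operad structure, the deconcatenation $(\delta,\ve)$ with a cooperad structure, and $e:1\to\cA$ is a componentwise monomorphism (split by $\ve$). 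Thus the only genuinely new content is that $\lambda$ is an entwining and that one of the three equivalent compatibility conditions of Corollary~\ref{bialgebra} is satisfied.

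Next I would verify that $\lambda$ is an entwining by checking the four coherence diagrams of~\ref{f.entw}. On the model $\cA$ each of these is a finite identity between natural transformations built from $m$, $e$, $\delta$, $\ve$ and $\lambda$, and it can be tested on the generating symbols $\otimes_1,\otimes_2,v$ by tracking their images through the two composites; the unit/counit diagrams are immediate, one of the two pentagons is carried out explicitly in the text, and the remaining one is entirely analogous (a short bookkeeping of how deconcatenation splits a word). I would then record the two identities $\delta=\lambda\cdot\cA e$ and $m=\cA\ve\cdot\lambda$ in the same letter calculus, by evaluating both sides on $\otimes$ and on $v\in V$. By Lemma~\ref{liv} either identity alone already makes $\lambda$ compatible, so all three equivalent conditions (i), (ii), (iii) of Corollary~\ref{bialgebra} hold.

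Corollary~\ref{bialgebra} then applies directly: since, in addition, $e$ is a componentwise monomorphism, $(\bT,\bG,\lambda)$ is a bimonad — in fact a Hopf monad — the comparison functor $K_e:\A\to(\A_\bT)^{\widehat{\bG}}$ is an equivalence of categories, and every $(\cC,\cA)$-bialgebra is at once a free $\cA$-algebra and a cofree conilpotent $\cC$-coalgebra on its primitive part. The final step is to match this with the classical picture: with $\cA=\mathrm{Ass}$ and $\cC$ the associated cooperad, a $(\cC,\cA)$-bialgebra is a vector space carrying an associative product and a coassociative, conilpotent coproduct whose compatibility is exactly the one encoded by the above $\lambda$, i.e.\ the infinitesimal (Leibniz-type) relation; hence the displayed equivalence is precisely the Rigidity Theorem of Loday and Ronco, \cite[Theorem~2.6]{LR}.

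I expect the main obstacle to be this last translation — checking on the nose that the $\lambda$-bimodule condition for this particular $\lambda$ coincides with the hands-on definition of an infinitesimal bialgebra (including signs and normalisation) and that "conilpotent $\cC$-coalgebra" is the conilpotency hypothesis used in~\cite{LR}. The diagram chases for the entwining axioms and for the two compatibility identities are routine but error-prone, since deconcatenation produces many summands, so some care with the letter calculus is needed there as well.
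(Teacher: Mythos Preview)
Your proposal is correct and follows essentially the same route as the paper: verify that $\lambda$ is an entwining via the letter calculus (with one pentagon worked out explicitly), check the identities $\delta=\lambda\cdot\cA e$ and $m=\cA\ve\cdot\lambda$, and then invoke Corollary~\ref{bialgebra}. Your additional remarks on identifying the underlying $\Sy$-module with the associative operad and on matching the $\lambda$-bimodule condition with the classical infinitesimal compatibility are helpful elaborations, but the core argument is the same.
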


\bigskip

{\bf Acknowledgments.} The second author gratefully acknowledges support
by the {\em German Academic Exchange Service (DAAD})
for a research stay at Heinrich Heine University D\"usseldorf and
the warm hospitality provided there.
He also wants to thank the {\em Shota Rustaveli National Science Foundation}
for the assistance with Grants  DI/12/5-103/11 and DI/18/5-113/13. The authors would like to thank
the organizers of the international conference "Homotopy and Non-Commutative Geometry", Tbislissi, 2011, where
they had the opportunity to start this project together.

\end{document}